\newcommand{\N}{{\mathbb N}}
\newcommand{\R}{{\mathbb R}}
\newtheoremstyle{mystyle}               
  {}                
  {}                
  {}        
  {}                
  {\bfseries \itshape}       
  {.}      
  { }      
  {}       
\newtheorem{theorem}{Theorem}[section]
\newtheorem{lemma}[theorem]{Lemma} 
\newtheorem{corollary}[theorem]{Corollary}
\theoremstyle{definition}
\newtheorem{definition}[theorem]{Definition}
\newtheorem{example}[theorem]{Example}	
\theoremstyle{mystyle}
\newtheorem{remark}[theorem]{Remark}
\numberwithin{equation}{section}
\title{An explicit formula of the parameter dependence of de partial derivatives of the Green's functions related to arbitrary two-point boundary conditions}
\author{Alberto Cabada and Luc\'{\i}a L\'opez-Somoza\\
	CITMAga, 15782 Santiago de Compostela, Galicia, Spain.\\ Departamento de Estat\'istica, An\'alise Matem\'atica e Optimizaci\'on, \\
	Facultade de Matem\'aticas,
	Universidade de Santiago de Compostela,\\ Galicia, Spain.\\ alberto.cabada@usc.es and lucia.lopez.somoza@usc.es}
\begin{document}
\maketitle
\begin{abstract}
In this paper we obtain an explicit formula of the parameter dependence of the partial derivatives of the Green's functions related to two-point boundary conditions. Such expression follows as an integral of both kernels times the difference of the corresponding parameters of each Green's function. As a direct consequence, we deduce a simpler proof of the monotony of the constant sign of the partial derivative of a Green's function with respect to a real parameter. As a consequence, we improve the results obtained in \cite{C1}, where the monotone dependence was proved for the constant sign Green's function (not for any ot its partial derivatives) and under weaker assumptions on the Green's function. The arguments are valid for any other types of Ordinary Differential Equations coupled to Nonlocal Conditions. Moreover, analogous ideas could be developed for Partial and Fractional Differential Equations.
\end{abstract}

\section{Introduction}

Let $M\in \R$ and $k \in \{0,\ldots,n-1\}$ be given, and consider the following $n$-th order linear two-point boundary value problem:
\begin{equation*}\label{1}
\qquad \qquad \qquad T_{n,k}\left[M\right]u\left(t\right)=\sigma\left(t\right),\quad t\in I:=\left[a,b\right],\quad B_{i}\left(u\right)=0,\quad i=1,\ldots,n, \qquad \qquad\quad {(P_{n,k}^M)}
\end{equation*}
where, 
\begin{equation*}
T_{n,k}\left[M\right]u\left(t\right):=L_{n} u\left(t\right)+M\, u^{(k)}(t), \quad t \in I, 
\end{equation*}
with 
\begin{equation*}
L_{n} u\left(t\right)= u^{\left(n\right)}\left(t\right)+a_{1}\left(t\right) u^{\left(n-1\right)}\left(t\right)+\cdots +a_{n}\left(t\right) u\left(t\right),\quad t\in I,
\end{equation*}
 and
\begin{equation*}
B_{i}\left(u\right)=\displaystyle \sum_{j=0}^{n-1} \left(\alpha_{j}^{i} u^{\left(j\right)}\left(a\right)+\beta_{j}^{i} u^{\left(j\right)}\left(b\right) \right),\quad i=1,\ldots ,n,
\end{equation*}
being $\alpha_{j}^{i}$, $\beta_{j}^{i}$  real constants for all  $i\in \{1,\ldots ,n\}$ and $j\in \{0,\ldots ,n-1\}$,   $\sigma$, $a_{j} \in C(I)$ for all $j\in \{0,\ldots ,n\}$.

We are looking for solutions that belongs to the space $C^n(I)$.

 Despite we consider the previous ODE coupled to the general two-point boundary conditions, as we will see along the proofs of the paper, the arguments could be adapted for a wider set of situations that cover, among others, ODEs with Nonlocal condtions, as Difference, Fractional or Partial Differential Equations.

First, we introduce the concept of Green's function related to Problem $(P_{n,k}^M)$, see \cite[Definition 1.4.1]{C1} for details.

\begin{definition}
	\label{d-Green-function}
 We say that $g_{n,k}[M] \in C^{n-2}(I \times I) \cap C^{n}((I \times I)\backslash \{(t,t), t \in I\})$ is the Green's function related to Problem $(P_{n,k}^M)$ if and only if it is the unique solution of  problem 
\begin{equation}
	\label{e-Tk}
	T_{n,k}\left[M\right] \left(g_{n,k}[M](t,s)\right)=0,\quad t\in I\backslash\{s\},\quad B_{i}\left(g_{n,k}[M](\cdot,s)\right)=0,\quad i=1,\ldots ,n,
\end{equation}
for any $ s \in (a,b)$ fixed.

Moreover, it satisfies the following jump condition at the diagonal of the square of definition:

For each $t\in (a,b)$ there exist and are finite, the lateral limits 
\begin{equation}
\label{condition-limits}
\hspace{-.4cm} \frac{\partial^{n-1}}{\partial t^{n-1}}g_{n,k}[M](t^{-},t)=\frac{\partial^{n-1} }{\partial t^{n-1}}g_{n,k}[M](t,t^{+}) \;\; \text{and} \;\;   \frac{\partial^{n-1}}{\partial t^{n-1}}g_{n,k}[M](t,t^{-})=\frac{\partial^{n-1}}{\partial t^{n-1}}g_{n,k}[M](t^{+},t)
\end{equation}
 and, moreover, 
\begin{equation}
	\label{condition-jump}
\hspace{-.2cm}	\frac{\partial^{n-1}}{\partial t^{n-1}}g_{n,k}[M](t^{+}, t)-\frac{\partial^{n-1} }{\partial t^{n-1}}g_{n,k}[M](t^{-}, t)=\frac{\partial^{n-1}}{\partial t^{n-1}}g_{n,k}[M](t, t^{-})-\frac{\partial^{n-1}}{\partial t^{n-1}}g_{n,k}[M](t,t^{+})=1.
	\end{equation}
\end{definition}
 
 Notice that, as a direct consequence of the regularity assumptions of the Green's function coupled to the jump condition \eqref{condition-jump} in previous definition, we deduce the following result.
 
 \begin{corollary}
 	\label{c-ceros-green-1} Let $M \in \R$, $k, \, l \in \{0,\ldots,n-1\}$ be given.  	
 	Then, for any $s \in (a,b)$ fixed, the following properties are fulfilled:
 	\begin{enumerate}
 		\item[$(i)$] If $\displaystyle\frac{\partial^l}{\partial t^l} g_{n,k}[M]\left(\cdot,s\right)$ vanishes on  $[a,s)$ then it cannot be identically zero on $(s,b]$.
 		\item[$(ii)$] If $\displaystyle\frac{\partial^l}{\partial t^l} g_{n,k}[M]\left(\cdot,s\right)$ vanishes on  $(s,b]$ then it cannot be identically zero on $[a,s)$.
 	\end{enumerate}
 \end{corollary}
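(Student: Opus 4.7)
The plan is to prove both $(i)$ and $(ii)$ simultaneously by contradiction. Assume for a fixed $s \in (a,b)$ that $\frac{\partial^l}{\partial t^l} g_{n,k}[M](\cdot,s)$ is identically zero on both $[a, s)$ and $(s, b]$; deriving a contradiction with the jump condition \eqref{condition-jump} will establish both implications at once, since each is the contrapositive of one half of this joint assumption.

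I would split the argument according to the value of $l$. When $l = n-1$, the conclusion is immediate: by \eqref{condition-limits} the lateral limits $\frac{\partial^{n-1}}{\partial t^{n-1}} g_{n,k}[M](s^-,s)$ and $\frac{\partial^{n-1}}{\partial t^{n-1}} g_{n,k}[M](s^+,s)$ exist, and under our assumption both of them must vanish, directly contradicting the unit jump prescribed by \eqref{condition-jump}.

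When $l \le n-2$, the idea is to integrate the vanishing up. On $[a, s)$ the function $g_{n,k}[M](\cdot,s)$ coincides with a polynomial $P(t)$ of degree at most $l-1$ in $t$, and analogously on $(s, b]$ it coincides with a polynomial $Q(t)$ of degree at most $l-1$. The $C^{n-2}(I \times I)$ regularity built into Definition \ref{d-Green-function} forces $P^{(j)}(s) = Q^{(j)}(s)$ for all $j = 0, \ldots, n-2$, and in particular for $j = 0, \ldots, l-1$. Since a polynomial of degree at most $l-1$ is uniquely determined by its value and its first $l-1$ derivatives at a single point, we conclude $P \equiv Q$. Hence $g_{n,k}[M](\cdot,s)$ coincides on all of $I$ with a single polynomial of degree at most $l-1 \le n-3$, whose $(n-1)$-th derivative is identically zero; this again contradicts the unit jump in \eqref{condition-jump}.

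The step requiring the most care is the matching argument for $l \le n-2$: one has to verify that the vanishing of $\frac{\partial^l}{\partial t^l} g_{n,k}[M]$ on the open one-sided intervals truly yields polynomial restrictions whose lateral derivatives at $t = s$ exist and match up to the required order. This is ensured by the combination of the existence of the lateral limits in \eqref{condition-limits} and the $C^{n-2}$ continuity of $g_{n,k}[M]$ across the diagonal, so no hypotheses beyond those of Definition \ref{d-Green-function} are required.
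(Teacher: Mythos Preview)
Your proof is correct and matches the paper's approach, which merely notes (without further detail) that the result is ``a direct consequence of the regularity assumptions of the Green's function coupled to the jump condition \eqref{condition-jump}''. For $l\le n-2$ you may shorten the argument by differentiating rather than integrating: since $g_{n,k}[M](\cdot,s)\in C^{n}$ away from the diagonal, the vanishing of the $l$-th $t$-derivative on each half-interval forces the $(n-1)$-th derivative to vanish there as well, and the contradiction with \eqref{condition-jump} follows immediately without the polynomial matching.
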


In order to deduce some comparison results for the constant sign of the partial derivatives of the Green's functions, we prove the following preliminary technical results.

\begin{lemma}
	\label{l-regularity-Green-function-l}
	Let $k \in \{0, \ldots,n-1\}$, $l \in \N$ and $M \in \R$ be fixed. Suppose that $a_j \in C^l(I)$ for all $j\in \{1, \ldots,n\}$, then $g_{n,k}[M] \in C^{n-2}(I \times I) \cap C^{n+l}((I \times I)\backslash \{(t,t), t \in I\})$. 
	
	Moreover, for each $t\in (a,b)$ and $m \in \{n-1,\ldots,n+l\}$, there exist and are finite, the lateral limits 
	\begin{equation}
		\label{e-limit-l}
		\frac{\partial^{m}}{\partial t^{m}}g_{n,k}[M](t^{-},t)=\frac{\partial^{m} }{\partial t^{m}}g_{n,k}[M](t,t^{+}) \quad \text{and} \quad   \frac{\partial^{m}}{\partial t^{m}}g_{n,k}[M](t,t^{-})=\frac{\partial^{m}}{\partial t^{m}}g_{n,k}[M](t^{+},t).
	\end{equation}
\end{lemma}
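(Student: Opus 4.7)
The plan is to prove both claims via a classical representation of $g_{n,k}[M]$ as the sum of a Cauchy function for $T_{n,k}[M]$ and a linear combination of fundamental solutions whose coefficients depend on $s$. Since $a_j\in C^l(I)$, repeated differentiation of the identity $u^{(n)}=-a_1u^{(n-1)}-\cdots-a_n u - M u^{(k)}$ applied to any classical solution shows inductively that every solution of $T_{n,k}[M]u=0$ lies in $C^{n+l}(I)$; I would fix a fundamental system $\{u_1,\ldots,u_n\}\subset C^{n+l}(I)$ once and for all. Likewise, the Cauchy function $w(t,s)$ of $T_{n,k}[M]$---defined, for each $s\in I$, as the unique solution of $T_{n,k}[M]w(\cdot,s)=0$ with $\frac{\partial^j}{\partial t^j}w(s,s)=0$ for $j=0,\ldots,n-2$ and $\frac{\partial^{n-1}}{\partial t^{n-1}}w(s,s)=1$---is jointly $C^{n+l}$ on $I\times I$ by the classical theorem on smooth dependence of ODE solutions on initial conditions when the coefficients are $C^l$.

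Next, I would verify that $g_{n,k}[M]$ admits the off-diagonal representation
\[
g_{n,k}[M](t,s)=w(t,s)\,H(t-s)+\sum_{i=1}^n \alpha_i(s)\,u_i(t),
\]
where $H$ is the Heaviside step and the coefficients $\alpha_i(s)$ are determined by imposing the $n$ boundary conditions $B_i(g_{n,k}[M](\cdot,s))=0$. Indeed $w(t,s)H(t-s)$ is designed to carry exactly the jump prescribed by \eqref{condition-jump}, so the sum automatically satisfies the continuity and jump conditions, and the remaining linear system for $\alpha(s)=(\alpha_1(s),\ldots,\alpha_n(s))$ has the constant matrix $[B_i(u_j)]_{i,j=1}^n$, which is nonsingular by the existence-and-uniqueness of the Green's function. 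Its right-hand side depends on $s$ only through the values $\frac{\partial^k}{\partial t^k}w(b,s)$ for $k=0,\ldots,n-1$, hence belongs to $C^{n+l}(I)$, and therefore so do the $\alpha_i$.

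Plugging this back in gives the announced off-diagonal $C^{n+l}$-smoothness and also the lateral-limit identities \eqref{e-limit-l}: on each half-square $H(t-s)$ is constant and both $w(t,s)$ and $\sum_i \alpha_i(s)u_i(t)$ are jointly $C^{n+l}$, so for $m\in\{n-1,\ldots,n+l\}$ one has
\[
\frac{\partial^{m}}{\partial t^{m}}g_{n,k}[M](t^{-},t)=\sum_{i=1}^n \alpha_i(t)\,u_i^{(m)}(t)=\lim_{s\to t^+}\sum_{i=1}^n \alpha_i(s)\,u_i^{(m)}(t)=\frac{\partial^{m}}{\partial t^{m}}g_{n,k}[M](t,t^{+}),
\]
and the analogous chain from the other side also picks up the common value $\frac{\partial^m}{\partial t^m}w(t,t)$. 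The main obstacle is the joint $C^{n+l}$-regularity of the Cauchy function $w$, which is a nontrivial but standard consequence of smooth dependence of ODE solutions on initial conditions and on the initial time; once granted, the rest of the proof is direct linear algebra.
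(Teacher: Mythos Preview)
Your proof is correct and takes a genuinely different route from the paper's. The paper argues by a direct bootstrap on the equation \eqref{e-Tk}: starting from the given regularity $g_{n,k}[M]\in C^n$ off the diagonal, it writes $\partial_t^{\,n} g=-\sum_j a_j\,\partial_t^{\,n-j}g-M\,\partial_t^{\,k} g$, observes that each summand on the right lies in $C^{\min\{j,l\}}$, and concludes $g\in C^{n+1}$ off the diagonal whenever $l\ge 1$; iterating $l$ times yields $C^{n+l}$. The lateral limits \eqref{e-limit-l} are then obtained by the same recursive differentiation, starting from the known relations \eqref{condition-limits}.

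Your argument instead factors the Green's function once and for all as $w(t,s)H(t-s)+\sum_i\alpha_i(s)u_i(t)$, with $\{u_i\}$ a fundamental system and $w$ the Cauchy function, and reads off both the regularity and the lateral limits from the regularity of the pieces. This is more structural: it explains \emph{why} the limits in \eqref{e-limit-l} coincide (they equal $\sum_i\alpha_i(t)u_i^{(m)}(t)$ on the lower triangle and that same quantity plus $\partial_t^{\,m} w(t,t)$ on the upper one, both manifestly well defined), whereas the paper's recursion leaves this implicit. The price you pay is having to invoke smooth dependence of ODE flows on the initial time, which the bootstrap avoids entirely. One small overclaim in your write-up: the linear system for $\alpha$ only gives $\alpha_i$ as smooth in $s$ as $s\mapsto\partial_t^{\,j}w(b,s)$, and the $s$-regularity of the Cauchy function is governed by the coefficient regularity (roughly $C^{l+1}$), not $C^{n+l}$. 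This does not affect the $t$-derivative statements actually used later, and, to be fair, the paper's bootstrap shares the same imprecision since it too only upgrades $t$-derivatives.
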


\begin{proof}
	From equation \eqref{e-Tk}, since $a_j \in C^l(I)$ for all $j\in \{1, \ldots,n\}$ and $g_{n,k}[M] \in C^{n}((I \times I)\backslash \{(t,t), t \in I\})$, by defining $p(j):=\min{\{j,l\}}$, we deduce that each term on \eqref{e-Tk} satisfies
	\[
a_j(t) \, 	\frac{\partial^{n-j}}{\partial t^{n-j}}g_{n,k}[M](t,s)  \in C^{p(j)}((I \times I)\backslash \{(t,t), t \in I\}), \quad j\in \{1, \ldots,n\}.
	\]
Thus, we have that, provided $l \ge 1$, $g_{n,k}[M] \in C^{n+1}((I \times I)\backslash \{(t,t), t \in I\})$. 

But this property implies that
	\[
a_j(t) \, 	\frac{\partial^{n-j}}{\partial t^{n-j}}g_{n,k}[M](t,s)  \in C^{p(j+1)}((I \times I)\backslash \{(t,t), t \in I\}), \quad j\in \{1, \ldots,n\}
\]
and, in consequence,   $g_{n,k}[M] \in C^{n+2}((I \times I)\backslash \{(t,t), t \in I\})$, whenever $l \ge 2$.

By recurrence, using that $p(m)=l$ for all $m \ge l$, we arrive at
\[
g_{n,k}[M] \in C^{n+l}((I \times I)\backslash \{(t,t), t \in I\}).
\]

Arguing in an analogous way on the limit identities \eqref{condition-limits}, we deduce the equalities in \eqref{e-limit-l} from \eqref{e-Tk}. 
	\end{proof}

\begin{remark}
	Notice that, under the hypotheses of Lemma \ref{l-regularity-Green-function-l}, we have that for any  $m \in \{n-1,\ldots,n+l\}$, the partial derivatives $\frac{\partial^{m}}{\partial t^{m}}g_{n,k}[M]$ can be continuously extended to the closed triangles $\{(t,s), a\le t \le s\le b\}$ and  $\{(t,s), a\le s\le t \le b\}$. In particular, by considering such continuous extensions, the partial derivatives $\frac{\partial^{j}}{\partial t^{j}}g_{n,k}[M]$ are bounded in $I \times I$ for all $j \in \{0,\ldots,n+l\}$.
\end{remark}

\begin{remark}
	\label{r-ceros-u'}
It is very well know that any nontrivial solution of the homogeneous linear equation $T_n[M](u(t))=0$, $t \in I$, has (if any) a finite number of zeros on the bounded interval $I$. Despite this, provided that $a_j \in C^l(I)$ for all $j\in \{1, \ldots,n\}$, any of its derivatives $u^{(i)}$, $i\in \{1, \ldots,n+l\}$, may be non trivial and have infinitely many zeros on the bounded interval $I$. To see this, it is enough to consider the following first order homogeneous linear problem:
\begin{equation}
	\label{e-infinitos-ceros}
u'(t)=p_l(t) \, u(t), \; t \in [-1,1],\quad u(0)=1, 
\end{equation}
where, for any $l \in \N$, function $p_l$ is defined as
\[
 p_l(t):=	\begin{cases}
 	  t^{2\,l +1} \, \sin\left(\frac{1}{t}\right),  & \mbox{if}\; t \neq 0 \\
 	0, & \mbox{if}\; t=0.
 \end{cases}
\]  

Since $p_l \in C^{l}([-1,1])$, we have that the unique solution of Problem \eqref{e-infinitos-ceros}, which is given by the expression
\[
u(t)=e^{\int_0^t{p_l(s)\, ds}}, \quad t \in [-1,1],
\]
belongs to $C^{l+1}([-1,1])$ and is strictly positive on $[-1,1]$. 

Moreover, since function $p_l$ vanishes at infinitely many points in any neighborhood of $t=0$, we conclude, from \eqref{e-infinitos-ceros}, that $u'$ do too. So, as a direct consequence, any derivative $u^{(i)}$, $i\in \{1, \ldots, l+1\}$, is a non trivial function and has infinitely many zeros on $[-1,1]$.
\end{remark}

Tacking into account previous remark, we introduce the following result that ensures that the corresponding derivatives with respect to $t$ of the considered Green's function are the solutions of a homogeneous linear Ordinary Differential Equation and, in particular, if they are nontrivial, have a finite number of zeros on any bounded interval.
\begin{lemma}
	\label{l-Green-function-l}
	Let $k, l \in \{0, \ldots,n-1\}$ and $M \in \R$ be fixed. Suppose that $a_j \in C^l(I)$ for all $j\in \{1, \ldots,n\}$, and consider the following functions given by recurrence:
	\begin{enumerate}
		\item[(i)]  $ a_0(t) = 1 \; \mbox{for all $t \in I$}$.
		\item[(ii)] $b_{j,0}(t)=a_j(t) \; \mbox{for all $t \in I$ and } \; j \in \{0, \ldots,n\}\backslash\{k\}.$
		\item[(iii)] $b_{k,0}(t) = a_k(t)+M \; \mbox{for all $t \in I$}$.
		\item[(iv)] $b_{0,r}(t) = 1 \; \mbox{for all $t \in I$ and}\;  r \in \{1,\ldots,l\}.$
		\item[(v)] If $b_{n,r-1}(t)=0$ \; \mbox{for all $t \in I$}, then $b_{j,r}(t)=b_{j,r-1}(t)+b'_{j-1,r-1}(t)$, \;\mbox{for all $t \in I$, $j \in \{1, \ldots,n\}$}  and \mbox{$r \in \{1,\ldots,l\}$}.
		\item[(vi)] If $b_{n,r-1}(t)\neq 0$ \; \mbox{for all $t \in I$}, then $b_{j,r}(t)=b_{j,r-1}(t)+b_{n,r-1}(t) \left(\frac{b_{j-1,r-1}}{b_{n,r-1}}\right)'(t)$, \;\mbox{for all $t \in I$,} \mbox{$j \in \{1, \ldots,n\}$ and $r \in \{1,\ldots,l\}$}.
	\end{enumerate}		
Thus, if either $l=0$; or $l \in \{1, \ldots,n-1\}$ and, for any $r \in \{1,\ldots,l\}$ given, one of the two following properties holds:
	\begin{enumerate}
		\item $b_{n,r-1}\left(t\right)= 0$ for all $t \in I$,
		\item $b_{n,r-1}\left(t\right) \neq  0$ for all $t \in I$ and, either $\left(\frac{b_{n-1,r-1}}{b_{n,r-1}}\right)'(t) \neq -1$ for all $t \in I$ or $\left(\frac{b_{n-1,r-1}}{b_{n,r-1}}\right)'(t) = -1$ for all $t \in I$,
	\end{enumerate}
then, for any $s \in (a,b)$ fixed, it is satisfied that 
	\begin{equation}
		\label{e-Hl}
		H_{l} \left(\frac{\partial^l}{\partial t^l}g_{n,k}[M](t,s)\right)=0,\quad t\in I\backslash\{s\},
	\end{equation}
where
	\begin{equation*}
	H_{l} \left(u(t)\right):=u^{\left(n\right)}\left(t\right)+b_{1,l}\left(t\right) u^{\left(n-1\right)}\left(t\right)+\cdots +b_{n,l}\left(t\right) u\left(t\right),\quad t\in I.
\end{equation*}

\end{lemma}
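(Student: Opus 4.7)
The plan is to proceed by finite induction on $r\in\{0,1,\ldots,l\}$, treating at each step the two alternatives (1) and (2) in parallel.

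For the base case $r=0$, I would observe that by definitions (i)--(iii) the operator $H_0$ coincides with $T_{n,k}[M]$ (the term $Mu^{(k)}$ being absorbed into the coefficient $b_{k,0}$, and with $b_{0,0}=1$), so the identity $H_0(g_{n,k}[M](\cdot,s))=0$ on $I\setminus\{s\}$ is exactly \eqref{e-Tk}.

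For the inductive step, suppose $H_{r-1}(v)=0$ on $I\setminus\{s\}$, where $v(t):=\frac{\partial^{r-1}}{\partial t^{r-1}}g_{n,k}[M](t,s)$, and set $w:=v'=\frac{\partial^r}{\partial t^r}g_{n,k}[M](\cdot,s)$. Lemma \ref{l-regularity-Green-function-l} provides enough smoothness to differentiate the equation $H_{r-1}(v)=0$ once in $t$ on $I\setminus\{s\}$. Expanding each $(b_{j,r-1}v^{(n-j)})'$ by the Leibniz rule, shifting the index $j\mapsto j+1$ in the sum containing the derivatives of the coefficients, and using $b'_{0,r-1}=0$ (consequence of (iv)), I would arrive at
\begin{equation*}
w^{(n)}+\sum_{j=1}^{n}\bigl(b_{j,r-1}+b'_{j-1,r-1}\bigr)\,w^{(n-j)}+b'_{n,r-1}\,v=0.
\end{equation*}
In case (1), one has $b_{n,r-1}\equiv 0$, hence $b'_{n,r-1}\equiv 0$, and the coefficient $b_{j,r-1}+b'_{j-1,r-1}$ matches formula (v); thus the identity above reads exactly $H_r(w)=0$.

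In case (2), I would use the inductive equation $H_{r-1}(v)=0$ (which now has a nonzero leading-order-zero coefficient) to solve for $v$:
\begin{equation*}
v=-\frac{1}{b_{n,r-1}}\sum_{j=1}^{n}b_{j-1,r-1}\,w^{(n-j)},
\end{equation*}
substitute this back into the displayed identity, and collect terms to obtain
\begin{equation*}
w^{(n)}+\sum_{j=1}^{n}\Bigl(b_{j,r-1}+b'_{j-1,r-1}-\tfrac{b'_{n,r-1}}{b_{n,r-1}}\,b_{j-1,r-1}\Bigr)w^{(n-j)}=0.
\end{equation*}
A direct application of the quotient rule identifies the parenthesized coefficient with $b_{j,r-1}+b_{n,r-1}\bigl(b_{j-1,r-1}/b_{n,r-1}\bigr)'=b_{j,r}$ as prescribed by (vi), closing the induction. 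The extra dichotomy required in case (2) — that $(b_{n-1,r-1}/b_{n,r-1})'$ be either identically $-1$ or nowhere $-1$ — ensures that $b_{n,r}=b_{n,r-1}\bigl(1+(b_{n-1,r-1}/b_{n,r-1})'\bigr)$ is either identically zero or nowhere zero on $I$, so the dichotomy is available again at step $r+1$ and the recursion is consistent.

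The main obstacle is the index bookkeeping in the two sums, especially the boundary term at $j=n$ where the index shift would otherwise produce a ``$w^{(-1)}$'' (i.e.\ $v$ itself); this is precisely the term that must be eliminated, trivially in case (1) because $b'_{n,r-1}$ vanishes, and in case (2) through the algebraic substitution using the inductive equation followed by the quotient-rule identification that recovers the coefficient appearing in the statement.
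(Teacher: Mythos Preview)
Your proof is correct and follows essentially the same approach as the paper: differentiate the equation satisfied by the $(r-1)$-th derivative, and in the case $b_{n,r-1}\not\equiv 0$ eliminate the undifferentiated term by solving for it from that same equation and substituting back, recovering the recurrence (v)/(vi) via the quotient rule. The paper carries this out explicitly only for the step $r=0\to r=1$ and then invokes recurrence, whereas you have written out the general inductive step and also made explicit why the dichotomy on $(b_{n-1,r-1}/b_{n,r-1})'$ is needed to propagate the hypothesis.
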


\begin{proof}
	To avoid tedious notation, the proof will be done for operator $L_n$, i.e., the case $M=0$. 
	The general situation, for any $M \in \R$, holds by using the notation $a_k(t)\equiv a_k(t)+M$.

First, note that, from Lemma \ref{l-regularity-Green-function-l}, we know that $g_{n,k}[0] \in C^{n+l}((I \times I)\backslash \{(t,t), t \in I\}$ and, as a consequence, the left side in equation \eqref{e-Hl} is well defined.
	
	The case $l=0$ is just equation \eqref{e-Tk}. 
	
	Consider now $l=1$. For every $s \in (a,b)$ fixed, denoting by $v_s(t):=g_{n,k}[0](t,s)$, and differentiating on equation \eqref{e-Tk} with respect to $t$, we have that
	\begin{equation*}
		L_n \left(v_s'(t)\right) +a'_{1}\left(t\right) v_s^{\left(n-1\right)}\left(t\right)+\cdots +a'_{n-1}\left(t\right) v_s'\left(t\right)+a'_{n}\left(t\right) v_s\left(t\right) =0,\quad t\in I\backslash\{s\}.
	\end{equation*}

	So, if $a_{n}\left(t\right) (=b_{n,0}(t))=0$ for all $t \in I$, from $(i)-(vi)$ we deduce that \eqref{e-Hl} holds. 
	
In the second case, that is, $a_{n}\left(t\right) (=b_{n,0}(t))\neq 0$ for all $t \in I$, by substituting in \eqref{e-Tk}, we have that
	\begin{equation*}
		v_s\left(t\right)= \left( -v_s^{\left(n\right)}\left(t\right)-a_{1}\left(t\right) v_s^{\left(n-1\right)}\left(t\right)- \cdots -a_{n-1}\left(t\right) v_s'\left(t\right)\right)/a_n(t),\quad t\in I\backslash\{s\}.
	\end{equation*}

	In consequence, it is not difficult to verify that
	\begin{equation*}
		H_{1} \left(v_s'(t)\right):=v_s^{\left(n+1\right)}\left(t\right)+b_{1,1}\left(t\right) v_s^{\left(n\right)}\left(t\right)+\cdots +b_{n,1}\left(t\right) v_s'\left(t\right)=0,\quad t\in I\backslash\{s\},
	\end{equation*}
	where the coefficients are given in $(i)-(vi)$.
	
	To study the case $l=2$, it is enough to change in previous argument $a_j$ by $b_{j,1}$ for all $j \in \{0, 1, \ldots,n\}$. 
	
	The rest of the situations, for $l \le n-1$, are proven analogously.
	
	We point out that, to apply the recurrence method, we must take into account that the inequality $\left(\frac{b_{n-1,r-1}}{b_{n,r-1}}\right)'(t) \neq -1$ for all $t \in I$ implies that $b_{n,r}(t)\neq 0$ for all $t \in I$, while $\left(\frac{b_{n-1,r-1}}{b_{n,r-1}}\right)'(t) = -1$ for all $t \in I$ says us that $b_{n,r}(t)= 0$ for all $t \in I$.
\end{proof}

\begin{remark}
	\label{r-bjr}
Notice that, if $l \ge 1$, from $(i)-(vi)$ in previous lemma, it is immediate to verify that $b_{j,r} \in C^{l-r}(I)$, for all  $j \in \{1, \ldots,n\}$ and $r \in \{1,\ldots,l\}$. In particular, all the coefficients of $H_l$ are continuous in $I$. 
\end{remark}

\begin{remark}
	\label{r-bjr-2}
If the coefficients of operator $L_n$ are constant, then it is immediate to verify that Lemma \ref{l-Green-function-l} hols. Moreover, $b_{j,r}=a_j$ for all $j \in \{1, \ldots,n\}$ and $r \in \{1,\ldots,l\}$ and, as a direct consequence, $H_l=T_{n,k}[M]$ for all $l \in \{0, \ldots,n-1\}$.
\end{remark}
Thus, as a direct consequence of Lemma \ref{l-Green-function-l} and Remark \ref{r-bjr}, we arrive at the following result.

\begin{corollary}
	\label{c-ceros-green} Let $M \in \R$ and $l, k \in \{0,\ldots,n-1\}$ be given. Assume that the hypotheses in Lemma \ref{l-Green-function-l} are satisfied. Then, for any $s \in (a,b)$ fixed, the following properties are fulfilled:
	\begin{enumerate}
		\item[$(I)$] $\displaystyle\frac{\partial^l}{\partial t^l} g_{n,k}[M]\left(\cdot,s\right)$ either vanishes in $[a,s)$ or has (if any) a finite number of zeros in $[a,s)$.
		\item[$(II)$] $\displaystyle\frac{\partial^l}{\partial t^l} g_{n,k}[M]\left(\cdot,s\right)$ either vanishes on  $(s,b]$ or has (if any) a finite number of zeros in $(s,b]$.
		\item[$(III)$] If $\displaystyle\frac{\partial^l}{\partial t^l} g_{n,k}[M]\left(\cdot,s\right)$ vanishes on  $[a,s)$ then has (if any) a finite number of zeros in $(s,b]$.
		\item[$(IV)$] If $\displaystyle\frac{\partial^l}{\partial t^l} g_{n,k}[M]\left(\cdot,s\right)$ vanishes on  $(s,b]$ then has (if any) a finite number of zeros in $[a,s)$.
	\end{enumerate}
\end{corollary}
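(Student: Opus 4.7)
The plan is to view $v(t):=\frac{\partial^l}{\partial t^l}g_{n,k}[M](t,s)$ as a solution of the $n$-th order homogeneous linear ODE $H_l u = 0$ and apply the classical fact that nontrivial solutions of such equations have only finitely many zeros on a bounded interval. Concretely, by Lemma \ref{l-Green-function-l}, $v$ satisfies $H_l(v)=0$ on $I\setminus\{s\}$, and by Remark \ref{r-bjr} the coefficients $b_{j,l}$ of $H_l$ are continuous on the whole of $I$. Moreover, by Lemma \ref{l-regularity-Green-function-l}, the $t$-derivatives of $g_{n,k}[M]$ up to order $n+l\ge n$ possess finite lateral limits as $t\to s^\pm$, so $v$ extends to a $C^n$ function on each of the closed intervals $[a,s]$ and $[s,b]$ satisfying the ODE there.

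For parts $(I)$ and $(II)$ I would invoke the standard zero-isolation argument for $n$-th order linear ODEs with continuous coefficients. Suppose $v$ has infinitely many zeros on, say, $[a,s]$; then by compactness they accumulate at some $t_0\in[a,s]$, and continuity yields $v(t_0)=0$. Applying Rolle's theorem iteratively on the gaps between consecutive zeros and passing to subsequences, one deduces $v^{(j)}(t_0)=0$ for every $j\in\{0,\ldots,n-1\}$. Uniqueness of the Cauchy problem for $H_l u=0$ on $[a,s]$ then forces $v\equiv 0$ on $[a,s)$, which is the alternative in $(I)$. The argument on $[s,b]$ is identical and yields $(II)$.

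For parts $(III)$ and $(IV)$ I would simply combine the previous dichotomy with Corollary \ref{c-ceros-green-1}. If $v\equiv 0$ on $[a,s)$, Corollary \ref{c-ceros-green-1}$(i)$ prevents $v$ from being identically zero on $(s,b]$, so by $(II)$ it can only have finitely many zeros there; this is $(III)$. Interchanging the roles of $[a,s)$ and $(s,b]$ and using part $(ii)$ of Corollary \ref{c-ceros-green-1} together with $(I)$ gives $(IV)$.

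I do not anticipate a serious obstacle: the only delicate point is making sure one is entitled to use uniqueness of the Cauchy problem up to (and including) the endpoint $t=s$, which is exactly what the lateral-limit statement of Lemma \ref{l-regularity-Green-function-l} buys us, while the continuity on the whole of $I$ of the coefficients of $H_l$ (Remark \ref{r-bjr}) rules out accumulation of zeros anywhere in $[a,s]$ or $[s,b]$. Everything else is standard ODE bookkeeping.
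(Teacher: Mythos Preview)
Your proposal is correct and follows essentially the same approach as the paper: for $(I)$--$(II)$ the paper simply invokes that $\frac{\partial^l}{\partial t^l}g_{n,k}[M](\cdot,s)$ solves the homogeneous linear ODE $H_l u=0$ with continuous coefficients (you supply the standard Rolle/Cauchy-uniqueness details behind that fact), and for $(III)$--$(IV)$ the paper appeals to the jump condition \eqref{condition-jump} together with $(I)$--$(II)$, which is exactly what Corollary~\ref{c-ceros-green-1} encodes. The only cosmetic difference is that you cite Corollary~\ref{c-ceros-green-1} and Lemma~\ref{l-regularity-Green-function-l} explicitly, whereas the paper's proof is terser.
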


\begin{proof}
	The properties $(I)$ and $(II)$ hold from the fact, proven in  Lemma \ref{l-Green-function-l}, that $\displaystyle\frac{\partial^l}{\partial t^l} g_{n,k}[M]\left(t,s\right)$ is the solution of a homogeneous linear ordinary differential equation on $I\backslash\{s\}$ with continuous coefficients.
	
	Assertions $(III)$ and $(IV)$ hold from condition \eqref{condition-jump} and items $(I)$ and $(II)$.
\end{proof}

Now, we enunciate the following consequence of Fredholm's Alternative that ensures the existence and uniqueness of the solution of problem $(P_{n,k}^M)$.
 \begin{theorem}
 	\label{t-exis-green}
 	Problem $(P_{n,k}^M)$ 	has a unique solution if and only if the homogeneous problem 
 	\begin{equation}
 	T_{n,k}\left[M\right] u\left(t\right)=0,\quad t\in I,\quad B_{i}\left(u\right)=0,\quad i=1,\ldots ,n,
 	\end{equation}
 	has only the trivial solution.
 	
 	In such a case, the unique solution $u_{n,k}[M]$ of $(P_{n,k}^M)$ is given by the expression
 	\begin{equation*}
 		u_{n,k}[M]\left(t\right)=\displaystyle \int_{a}^{b} g_{n,k}[M]\left(t,s
 		\right) \sigma\left(s\right) ds, \quad t \in I,
 	\end{equation*}
 with $g_{n,k}[M]$ the Green's function introduced in Definition \ref{d-Green-function}.
 \end{theorem}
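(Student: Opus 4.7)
The plan is to reduce $(P_{n,k}^M)$ to a finite-dimensional linear system via a fundamental system of the homogeneous equation, and then to read off the Green's function by Cramer's rule. First, I would fix a basis $\{u_1,\ldots,u_n\}\subset C^n(I)$ of solutions of $T_{n,k}[M]u=0$; such a basis exists since the coefficients $a_j$ are continuous on the compact interval $I$ and the leading coefficient is $1$. By variation of parameters I would construct a particular solution $u_p(t)=\int_a^t K(t,s)\,\sigma(s)\,ds$, where $K$ is the Cauchy kernel built from $u_1,\ldots,u_n$ and their Wronskian. By construction, $K(\cdot,s)$ solves the homogeneous equation for each fixed $s$ and satisfies $\frac{\partial^i K}{\partial t^i}(s,s)=0$ for $i\in\{0,\ldots,n-2\}$ and $\frac{\partial^{n-1}K}{\partial t^{n-1}}(s,s)=1$. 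Hence every $C^n$-solution of $T_{n,k}[M]u=\sigma$ has the form $u=u_p+\sum_{i=1}^n c_i u_i$.

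Applying the boundary functionals, the conditions $B_i(u)=0$ become the linear system $A\,\mathbf{c}=-\mathbf{d}$, where $A=(B_i(u_j))_{i,j=1}^n$ and $d_i=B_i(u_p)$. This system is uniquely solvable for every right-hand side precisely when $\det A\neq 0$, which in turn is equivalent to the homogeneous BVP admitting only the trivial solution: indeed a nontrivial homogeneous solution is exactly a nonzero element of $\ker A$ written as $\sum c_i u_i$, and conversely. This gives the stated equivalence and, under the nondegeneracy hypothesis, yields a unique $u_{n,k}[M]\in C^n(I)$ for each $\sigma\in C(I)$.

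For the integral representation, I would invert $A$ (e.g.\ by Cramer's rule), substitute the resulting $c_i=c_i[\sigma]$ (which depend linearly on $\sigma$ through integrals against the $u_j$) back into $u=u_p+\sum c_i u_i$, and exchange finite sums with the integral to write $u_{n,k}[M](t)=\int_a^b g_{n,k}[M](t,s)\,\sigma(s)\,ds$, where $g_{n,k}[M](t,s)$ is a sum of products $\phi_i(t)\psi_i(s)$ plus the piece $K(t,s)$ on the triangle $\{s\le t\}$. It then remains to verify that this kernel coincides with the object of Definition \ref{d-Green-function}: on each of the triangles $t<s$ and $t>s$ it is a linear combination of the $u_i$, hence $T_{n,k}[M]\,g_{n,k}[M](\cdot,s)=0$ off the diagonal; it satisfies $B_i(g_{n,k}[M](\cdot,s))=0$ by the very construction of the $c_i$; and the regularity $C^{n-2}(I\times I)\cap C^n((I\times I)\setminus\{(t,t)\})$ together with the limit identities \eqref{condition-limits} and the unit jump \eqref{condition-jump} are inherited directly from the normalization of the Cauchy kernel $K$ at $t=s$. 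Uniqueness of the Green's function is then automatic, because the difference of two admissible kernels would, for each fixed $s$, be a $C^n$ solution of the homogeneous BVP and hence identically zero by the nondegeneracy hypothesis.

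The main obstacle is the bookkeeping in the final step: carefully assembling the Cramer-rule expressions with the Cauchy kernel into a single function of $(t,s)$ that has exactly the regularity $C^{n-2}(I\times I)\cap C^n((I\times I)\setminus\{(t,t)\})$ and precisely a unit jump in $\partial^{n-1}_t$ across the diagonal, rather than some scalar multiple of it. Once this is set up correctly, every other property is a routine consequence of linearity and the Fredholm alternative for finite-dimensional systems.
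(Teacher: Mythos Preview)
Your proposal is correct and follows the classical construction of the Green's function for a linear two-point boundary value problem: fundamental system, Cauchy kernel via variation of parameters, reduction of the boundary conditions to a finite linear system, and assembly of the kernel by Cramer's rule. All the ingredients you list are the right ones, and your identification of the bookkeeping at the diagonal (the $C^{n-2}$ matching and the unit jump in $\partial_t^{n-1}$) as the only delicate point is accurate.

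It is worth noting, however, that the paper itself does \emph{not} prove this theorem. It is simply enunciated as ``the following consequence of Fredholm's Alternative,'' with the implicit understanding that the details are standard and can be found, for instance, in the monograph~\cite{C1} referenced throughout. So there is no ``paper's own proof'' to compare against: you have supplied the argument that the authors chose to omit. Your construction is exactly the one that underlies Definition~\ref{d-Green-function} and is the standard textbook route; nothing different or more elementary is being done in the paper.
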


\section{A linking formula}

In this section we obtain a formula that links the expression of $\frac{\partial^l}{\partial t^l}	g_{n,k}[M]$, $l \in \{0, \ldots,n\}$, for two different values of the real parameter $M$. Such expression will allow us to prove the monotone behavior of such derivatives with respect to the parameter $M$, under suitable assumptions on the constant sign of such derivative at one value of the parameter and the one of $\frac{\partial^k}{\partial t^k}	g_{n,k}[M]$ at the other. 

 Before present the obtained formula, we enunciate the following result, proved by the author in \cite[Theorems 1.8.1, 1.8.5, 1.8.6 and 1.8.9]{C1}, in which is proved, for $k=0$, that the interval where the Green's functions have constant sign is, if not empty, and interval. Moreover, the Green's function is monotone nonincreasing with respect to the parameter $M$ in such interval. The proof in such reference is completely different to the one that is proved here. The proof given there follows from monotone iterative techniques and lower and upper solutions methods. The result is the following:

\begin{theorem}
	\label{t-Green-dec}
	Let $k=0$ and $M_1$, $M_2 \in \R$ be given. Suppose that Problem $(P^{M_j}_{n,0})$ has a unique solution for any continuous function $\sigma$ and $j=1,\, 2$.
Suppose that the two related Green's functions, $g_{n,0}[M_{j}]$, have the same constant sign on $I \times I$ for $j=1,2$. Then, if $M_1 < M_2$ it is satisfied that  $g_{n,0}[M_{2}]\le g_{n,0}[M_{1}]$  on $I \times I$.\\
	Moreover, for any $\bar{M} \in (M_1,M_2)$, Problem $(P^{\bar{M}}_{n,0})$ has a unique solution for any continuous function $\sigma$,  and the related Green's function $g_{n,0}[\bar{M}]$ satisfies $g_{n,0}[M_{2}]\le g_{n,0}[\bar{M}]\le g_{n,0}[M_{1}]$  on $I \times I$.
\end{theorem}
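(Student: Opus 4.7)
The plan is to derive an explicit ``linking formula'' relating the two Green's functions and obtain the monotonicity directly from the sign of the integrand. Because $k=0$, the two operators differ by a multiple of the identity, $T_{n,0}[M_1]-T_{n,0}[M_2]=(M_1-M_2)\,\mathrm{Id}$, which makes the derivation clean. Let $\sigma\in C(I)$ be arbitrary and let $u_j$ denote the unique solution of $(P^{M_j}_{n,0})$ with data $\sigma$, $j=1,2$. Subtracting the two equations gives $T_{n,0}[M_1](u_1-u_2)=(M_2-M_1)\,u_2$; since $u_1-u_2$ still satisfies the homogeneous boundary conditions, Theorem~\ref{t-exis-green} yields
\[
u_1(t)-u_2(t)=(M_2-M_1)\int_a^b g_{n,0}[M_1](t,\tau)\,u_2(\tau)\,d\tau.
\]
Substituting the Green's function representations of $u_1$ and $u_2$, swapping the order of integration via Fubini's theorem, and exploiting that $\sigma\in C(I)$ is arbitrary, I extract the pointwise identity
\[
g_{n,0}[M_1](t,s)-g_{n,0}[M_2](t,s)=(M_2-M_1)\int_a^b g_{n,0}[M_1](t,\tau)\,g_{n,0}[M_2](\tau,s)\,d\tau
\]
on $I\times I$.

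Assume without loss of generality that both kernels are nonnegative (the opposite-sign case is entirely symmetric). Then the integrand above is pointwise nonnegative and $M_2-M_1>0$, so $g_{n,0}[M_2]\le g_{n,0}[M_1]$ on $I\times I$, which is the first conclusion of the theorem. For the second conclusion, I would combine the linking formula with a continuation argument along $[M_1,M_2]$. Set $S=\{\mu\in[M_1,M_2]\colon g_{n,0}[\mu]\text{ exists and shares the common sign on }I\times I\}$, so that $\{M_1,M_2\}\subseteq S$. Openness of $S$ in $[M_1,M_2]$ follows from Theorem~\ref{t-exis-green} together with the fact that the spectrum of the compact operator $(K_1\varphi)(t)=\int_a^b g_{n,0}[M_1](t,\tau)\varphi(\tau)\,d\tau$ is discrete, and with continuity in $\mu$ of the integral equation $g_{n,0}[\mu]+(\mu-M_1)K_1 g_{n,0}[\mu]=g_{n,0}[M_1]$. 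Closedness rests on the a priori uniform bound $0\le g_{n,0}[\mu]\le g_{n,0}[M_1]$ obtained by applying the already proven monotonicity to pairs inside $S$, which together with compactness of $K_1$ lets one pass to the limit in the same integral equation at any accumulation point $\mu^*$ to recover $g_{n,0}[\mu^*]$. Thus $S=[M_1,M_2]$, and applying the linking formula to the pairs $(M_1,\bar M)$ and $(\bar M,M_2)$ now produces the sandwich $g_{n,0}[M_2]\le g_{n,0}[\bar M]\le g_{n,0}[M_1]$.

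The derivation of the linking formula itself is essentially elementary algebra plus a Fubini interchange. The part I expect to be the main obstacle is the continuation argument for the intermediate values: concretely, rigorously ensuring that no eigenvalue of $-L_n$ (subject to the given boundary conditions) lies in the open interval $(M_1,M_2)$ and that the sign property really propagates under small perturbations of the parameter. The crucial ingredients are the compactness and positivity of $K_1$, the uniform bound supplied by the monotonicity proved for the endpoint pair, and the continuity of the resolvent on the resolvent set; once these are assembled, the continuation argument for $S$ closes the proof.
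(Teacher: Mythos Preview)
The paper does not actually prove Theorem~\ref{t-Green-dec} here: it is quoted from \cite{C1}, and the text says explicitly that in that reference ``the proof given there follows from monotone iterative techniques and lower and upper solutions methods.'' What the present paper does instead is establish the linking formula (Theorem~\ref{t-formula-M}) and then use it to obtain sharper monotonicity statements (Theorem~\ref{t-igual-signo-gm1-gm2}) under additional hypotheses.

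Your derivation of the linking identity and of the first conclusion $g_{n,0}[M_2]\le g_{n,0}[M_1]$ is correct and is precisely the paper's new approach: your formula is \eqref{e-green-M0-M1} with $k=0$, and the sign argument is the $k=l=0$ case of Corollary~\ref{co-signo1}/Theorem~\ref{t-igual-signo-gm1-gm2}. So for this half you have rediscovered the paper's method, which is genuinely different from (and more direct than) the monotone-iteration proof of \cite{C1}.

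For the second conclusion (existence and sandwich for intermediate $\bar M$), your continuation argument is a different route from \cite{C1}, and it has a real gap at the openness step. Discreteness of the spectrum of $K_1$ and continuity of the resolvent only tell you that $g_{n,0}[\mu]$ \emph{exists} for $\mu$ near a point of $S$; they do not tell you it stays nonnegative, and nonnegativity is a closed condition, not an open one. Your closedness argument, by contrast, is sound in outline (the uniform bound $0\le g_{n,0}[\mu]\le g_{n,0}[M_1]$ on $S$ lets you pass to limits in the integral equation), but combined with a defective openness step it does not close the connectedness argument. One way to repair this is to run a monotone iteration on the fixed-point equation $h=g_{n,0}[M_2]+(M_2-\mu)K_2 h$: starting from $0$ gives an increasing sequence, starting from $g_{n,0}[M_1]$ gives a decreasing one (use the linking formula at the endpoints to check $g_{n,0}[M_1]$ is a supersolution), and both stay in the order interval $[0,g_{n,0}[M_1]]$; their common limit is the Green's function at $\mu$ and is automatically nonnegative, with no eigenvalue possibly occurring in $(M_1,M_2)$. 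Note, however, that this is essentially the lower/upper solutions mechanism of \cite{C1}, so after the repair your second half is no longer an independent argument but a repackaging of the cited proof.
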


In the sequel, we deduce two explicit formulas that links the values of two Green's functions related to two different parameters. 

\begin{theorem}
	\label{t-formula-M}
	Let $M_{0}, M_{1}\in \mathbb{R}$ and $k \in \{0,\ldots,n-1\}$ be fixed. Suppose that  problem $(P^{M_j}_{n,k})$, $j=0,1$, has a unique solution. Let $g_{n,k}[M_j]$, $j=0,1$, be the Green's functions related to problem $(P^{M_j}_{n,k})$, $j=0,1$, respectively. Then, the two following identities are fulfilled for all $t,\,s \in I$:
\begin{equation}\label{e-green-M0-M1}
	g_{n,k}[M_{1}]\left(t,s\right)=g_{n,k}[M_{0}]\left(t,s\right)+\left(M_{0}-M_{1}\right) \displaystyle \int_{a}^{b} g_{n,k}[M_{0}]\left(t,r\right) \frac{\partial^k}{\partial t^k} g_{n,k}[M_{1}]\left(r,s\right) dr .
\end{equation}
\begin{equation}\label{e-green-M0-M1-2}
	g_{n,k}[M_{1}]\left(t,s\right)=g_{n,k}[M_{0}]\left(t,s\right)+\left(M_{0}-M_{1}\right) \displaystyle \int_{a}^{b}  g_{n,k}[M_{1}]\left(t,r\right) \frac{\partial^k}{\partial t^k} g_{n,k}[M_{0}]\left(r,s\right)  dr.
\end{equation}
\end{theorem}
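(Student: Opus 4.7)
The strategy is to test both identities against an arbitrary continuous right-hand side and exploit the integral representation provided by Theorem \ref{t-exis-green}. Fix $\sigma \in C(I)$ and let $u(t) = \int_a^b g_{n,k}[M_1](t,\xi)\,\sigma(\xi)\,d\xi$ be the unique solution of $(P^{M_1}_{n,k})$. Writing $L_n u + M_1 u^{(k)} = \sigma$ in the form
\[
T_{n,k}[M_0]\,u(t) = L_n u(t) + M_0\, u^{(k)}(t) = \sigma(t) + (M_0 - M_1)\,u^{(k)}(t),
\]
and noting that the boundary operators $B_i$ are the same for both problems, $u$ is also the unique solution of $(P^{M_0}_{n,k})$ with forcing term $\sigma + (M_0 - M_1)\,u^{(k)}$. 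Applying Theorem \ref{t-exis-green} once more, now to the $M_0$-problem, gives
\[
u(t) = \int_a^b g_{n,k}[M_0](t,r)\bigl[\sigma(r) + (M_0 - M_1)\,u^{(k)}(r)\bigr]\,dr.
\]

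Next, I differentiate the defining integral representation of $u$ exactly $k$ times with respect to $t$, obtaining $u^{(k)}(r) = \int_a^b \frac{\partial^k}{\partial t^k} g_{n,k}[M_1](r,\xi)\,\sigma(\xi)\,d\xi$, substitute into the previous display, and swap the order of integration via Fubini. Equating the two available expressions for $u(t)$ yields
\[
\int_a^b \sigma(\xi)\,\Bigl[g_{n,k}[M_1](t,\xi) - g_{n,k}[M_0](t,\xi) - (M_0 - M_1)\int_a^b g_{n,k}[M_0](t,r)\,\tfrac{\partial^k}{\partial t^k} g_{n,k}[M_1](r,\xi)\,dr\Bigr]\,d\xi = 0,
\]
valid for every $\sigma \in C(I)$. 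Since the bracketed expression is continuous in $\xi$ for each fixed $t$, it must vanish identically, which is precisely \eqref{e-green-M0-M1}. Formula \eqref{e-green-M0-M1-2} then follows by running the same argument after interchanging the roles of $M_0$ and $M_1$ (and using $M_1 - M_0 = -(M_0 - M_1)$).

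I expect the main obstacle to lie in justifying the differentiation under the integral sign needed to compute $u^{(k)}$. For $k \le n-2$ the kernel $\frac{\partial^k}{\partial t^k} g_{n,k}[M_1]$ is continuous on $I \times I$ and the classical Leibniz rule applies directly; for $k = n-1$ the kernel carries a unit jump across the diagonal, so one must split $\int_a^b = \int_a^t + \int_t^b$ and differentiate each piece with the appropriate boundary terms, then use the matching relations \eqref{condition-limits} and the jump identity \eqref{condition-jump} to see that the boundary contributions cancel exactly. A secondary point is the continuity in $\xi$ of the inner integral that appears in the bracketed expression above, needed in order to pass from the vanishing of its pairing with arbitrary $\sigma \in C(I)$ to its pointwise vanishing; this follows from the boundedness of the partial derivatives of the Green's functions highlighted in the remark after Lemma \ref{l-regularity-Green-function-l}, together with dominated convergence.
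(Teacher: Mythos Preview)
Your argument is correct and follows essentially the same route as the paper: rewrite $(P^{M_1}_{n,k})$ as a problem for $T_{n,k}[M_0]$ with forcing $\sigma+(M_0-M_1)u^{(k)}$, apply the Green's representation for $M_0$, substitute $u^{(k)}=\int_a^b\frac{\partial^k}{\partial t^k}g_{n,k}[M_1](\cdot,\xi)\sigma(\xi)\,d\xi$, swap the integrals, and use arbitrariness of $\sigma\in C(I)$ to identify the kernels. The paper likewise notes that the $k=n-1$ case requires the regularity of $g_{n,k}[M_1]$ off the diagonal together with \eqref{condition-limits} and \eqref{condition-jump}, and obtains \eqref{e-green-M0-M1-2} by the symmetric argument, exactly as you do.
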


\begin{proof}
 For any $j=0,1$, let  $u_{n,k}[M_j]$ be the unique solution of problem $(P^{M_j}_{n,k})$, $j=0,1$. From Theorem \ref{t-exis-green}, they are given by the expression 
\begin{equation}
	\label{e-uMj}
	u_{n,k}[M_j]\left(t\right)=\displaystyle \int_{a}^{b} g_{n,k}[M_{j}]\left(t,s\right) \sigma\left(s\right) ds, \; t \in I, \; \; j=0,1.
\end{equation}

To prove equality \eqref{e-green-M0-M1}, we use that problem $(P^{M_1}_{n,k})$ can be rewritten as:
\begin{equation*}
		T_k\left[M_{0}\right] u_{n,k}[M_1]\left(t\right)=\left(M_{0}-M_{1}\right) u^{(k)}_{n,k}[M_1]\left(t\right)+\sigma\left(t\right),\quad 	B_{i}\left(u_{n,k}[M_1]\right)=0,\;\; i=1,\ldots,n.
\end{equation*} 
Then, we  know that the unique solution of the previous problem  satisfies 
\begin{equation*}
	\begin{aligned}
		u_{n,k}[M_1]\left(t\right)=&\displaystyle \int_{a}^{b} g_{n,k}[M_{0}]\left(t,s\right) \left(M_{0}-M_{1}\right)\left(\displaystyle \int_{a}^{b} \frac{\partial^k}{\partial t^k}g_{n,k}[M_{1}]\left(s,r\right) \sigma\left(r\right) dr\right) ds\\
		&+\displaystyle \int_{a}^{b} g_{n,k}[M_{0}]\left(t,s\right) \sigma\left(s\right)ds\\
		=& \left(M_{0}-M_{1}\right) \displaystyle \int_{a}^{b}\left[ \int_{a}^{b}
		g_{n,k}[M_{0}]\left(t,s\right) \frac{\partial^k}{\partial t^k} g_{n,k}[M_{1}]\left(s,r\right) ds\right] \sigma\left(r\right) dr\\
		&+\displaystyle \int_{a}^{b} g_{n,k}[M_{0}]\left(t,s\right) \sigma\left(s\right)ds.
	\end{aligned}
\end{equation*}

Since previous equality is valid for all $\sigma$ in $C(I)$, using \eqref{e-uMj}, we conclude that equality \eqref{e-green-M0-M1}  holds.

We point out that the identity
\[
u^{(k)}_{n,k}[M_1](t)=\int_{a}^{b} \frac{\partial^k}{\partial t^k}g_{n,k}[M_{1}]\left(t,s\right) \sigma\left(s\right) ds
\]
follows directly from the fact that $g_{n,k}[M_1] \in C^{n-2}(I \times I)$ provided that $k \in \{0, \ldots,n-2\}$. When $k=n-1$ it holds from the fact that $g_{n,k}[M_1] \in   C^{n}((I \times I)\backslash \{(t,t), t \in I\})$ together the jump conditions \eqref{condition-limits} and \eqref{condition-jump}.  

Identity \eqref{e-green-M0-M1-2} is proven in an analogous way.
\end{proof}

Now, from Definition \ref{d-Green-function}, by a direct derivation on identities \eqref{e-green-M0-M1} and \eqref{e-green-M0-M1-2}, we arrive at the following result. 

\begin{theorem}
	\label{t-formula-M-derivatives}
	Let $l,\; k \in \{0, \ldots,n-1\}$ be given. Under the hypotheses of Theorem \ref{t-formula-M}, the following identities hold for all $t$, $ s \in I$ whenever $l<n-1$. Otherwise, if $l=n-1$, they are fulfilled for all $t$, $ s \in I$ such that $t \neq s$:
	\begin{equation}\label{e-green-M0-M1-derivative-1}
	\frac{\partial^l}{\partial t^l}	g_{n,k}[M_{1}]\left(t,s\right)=\frac{\partial^l}{\partial t^l}g_{n,k}[M_{0}]\left(t,s\right)+\left(M_{0}-M_{1}\right) \displaystyle \int_{a}^{b}  \frac{\partial^l}{\partial t^l} g_{n,k}[M_{0}]\left(t,r\right) \frac{\partial^k}{\partial t^k} g_{n,k}[M_{1}]\left(r,s\right) dr .
	\end{equation}
and
	\begin{equation}\label{e-green-M0-M1-derivative-2}
	\frac{\partial^l}{\partial t^l}	g_{n,k}[M_{1}]\left(t,s\right)=\frac{\partial^l}{\partial t^l}g_{n,k}[M_{0}]\left(t,s\right)+\left(M_{0}-M_{1}\right) \displaystyle \int_{a}^{b} \frac{\partial^l}{\partial t^l} g_{n,k}[M_{1}]\left(t,r\right) \frac{\partial^k}{\partial t^k} g_{n,k}[M_{0}]\left(r,s\right)  dr.
	\end{equation}

Moreover, for all $k \in \{0, \ldots,n-1\}$ fixed, we have the following identity for all $t$, $ s \in I$ such that $t \neq s$:
	\begin{eqnarray}
	\frac{\partial^n}{\partial t^n}	g_{n,k}[M_{1}]\left(t,s\right)&=&\frac{\partial^n}{\partial t^n}g_{n,k}[M_{0}]\left(t,s\right)+\left(M_{0}-M_{1}\right) \displaystyle \int_{a}^{b}  \frac{\partial^n}{\partial t^n} g_{n,k}[M_{0}]\left(t,r\right) \frac{\partial^k}{\partial t^k} g_{n,k}[M_{1}]\left(r,s\right) dr \nonumber \\
	&&+ \left(M_{0}-M_{1}\right)\,\frac{\partial^k}{\partial t^k} g_{n,k}[M_{1}]\left(t,s\right).\label{e-green-M0-M1-derivative-1-l=n}
\end{eqnarray}
and
\begin{eqnarray}
	\frac{\partial^n}{\partial t^n}	g_{n,k}[M_{1}]\left(t,s\right)&=&\frac{\partial^n}{\partial t^n}g_{n,k}[M_{0}]\left(t,s\right)+\left(M_{0}-M_{1}\right) \displaystyle \int_{a}^{b} \frac{\partial^n}{\partial t^n} g_{n,k}[M_{1}]\left(t,r\right) \frac{\partial^k}{\partial t^k} g_{n,k}[M_{0}]\left(r,s\right)  dr \nonumber\\
	&& + \left(M_{0}-M_{1}\right)\,\frac{\partial^k}{\partial t^k} g_{n,k}[M_{0}]\left(t,s\right).\label{e-green-M0-M1-derivative-2-l=n}
\end{eqnarray}

\end{theorem}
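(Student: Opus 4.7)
The plan is to take \eqref{e-green-M0-M1} and \eqref{e-green-M0-M1-2} and differentiate them $l$ times with respect to $t$ (with $s$ a fixed parameter); the whole argument then reduces to justifying, at each order, the interchange of the $t$-derivative with the $r$-integral.

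For $l\le n-2$ the kernel $\frac{\partial^l}{\partial t^l}g_{n,k}[M_0](t,r)$ is jointly continuous on $I\times I$ (because $g_{n,k}[M_0]\in C^{n-2}(I\times I)$), while $\frac{\partial^k}{\partial t^k}g_{n,k}[M_1](\cdot,s)$ is bounded on $I$ by the remark after Lemma~\ref{l-regularity-Green-function-l}. Hence the classical Leibniz rule applies and \eqref{e-green-M0-M1-derivative-1} follows at once; the same argument yields \eqref{e-green-M0-M1-derivative-2}.

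For $l=n-1$ and $l=n$, where the partial derivative of $g_{n,k}[M_0]$ in the first variable develops a jump at $r=t$, I would first differentiate under the integral up to order $n-2$ as above, and then split the integral as $\int_a^b=\int_a^t+\int_t^b$ and apply the Leibniz rule for integrals with a variable endpoint. At the $(n-1)$-th differentiation the resulting boundary term equals $\bigl[\frac{\partial^{n-2}}{\partial t^{n-2}}g_{n,k}[M_0](t,t^-)-\frac{\partial^{n-2}}{\partial t^{n-2}}g_{n,k}[M_0](t,t^+)\bigr]\frac{\partial^k}{\partial t^k}g_{n,k}[M_1](t,s)$, which vanishes by the $C^{n-2}(I\times I)$ regularity of $g_{n,k}[M_0]$; this proves \eqref{e-green-M0-M1-derivative-1} for $l=n-1$ and $t\ne s$. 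Applying the same split-and-differentiate step one more time to reach $l=n$, the boundary contribution becomes $\bigl[\frac{\partial^{n-1}}{\partial t^{n-1}}g_{n,k}[M_0](t,t^-)-\frac{\partial^{n-1}}{\partial t^{n-1}}g_{n,k}[M_0](t,t^+)\bigr]\frac{\partial^k}{\partial t^k}g_{n,k}[M_1](t,s)$, and combining \eqref{condition-limits} with \eqref{condition-jump} shows that the bracketed expression equals $1$; this is precisely the extra summand in \eqref{e-green-M0-M1-derivative-1-l=n}. Identity \eqref{e-green-M0-M1-derivative-2-l=n} follows in the same way with the roles of the two kernels swapped, so that the surviving factor is $\frac{\partial^k}{\partial t^k}g_{n,k}[M_0](t,s)$.

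The main obstacle I anticipate is simply the lateral-limit bookkeeping: one has to verify that no stray boundary contribution survives at any intermediate order, and that exactly the unit jump at order $n-1$ materialises when passing from $l=n-1$ to $l=n$. Once one keeps careful track of which partial derivatives of $g_{n,k}[M_0]$ are continuous across the diagonal (orders $0$ through $n-2$) and which are not (order $n-1$, with jump $1$), the verification is mechanical.
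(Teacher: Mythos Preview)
Your proposal is correct and is precisely the approach the paper takes: the paper simply asserts that the result follows ``by a direct derivation on identities \eqref{e-green-M0-M1} and \eqref{e-green-M0-M1-2}'' using Definition~\ref{d-Green-function}, and your write-up supplies the details of that differentiation, including the Leibniz-with-variable-endpoint bookkeeping that produces the extra jump term at order $n$. Your tracking of which boundary contributions vanish (orders $\le n-2$ by the $C^{n-2}$ regularity) and which survive (order $n-1$, with unit jump from \eqref{condition-jump}) is exactly what is needed.
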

Thus, by identifying the equalities \eqref{e-green-M0-M1-derivative-1} and \eqref{e-green-M0-M1-derivative-2}, together \eqref{e-green-M0-M1-derivative-1-l=n} and \eqref{e-green-M0-M1-derivative-2-l=n},  we deduce the following corollary:

\begin{corollary}
	\label{t-corollary-M1-M2}
	Under the hypotheses of Theorem \ref{t-formula-M}, the following identity holds for all $k, \; l \in \{0,\ldots,n-1\}$ and all $t,\,s \in I$ (for $l=n-1$ the identity holds by subtracting the lateral limits on the diagonal of $I \times I$):
	\begin{equation}
		\label{e-integral-1-2}
		\displaystyle \int_{a}^{b} \frac{\partial^l}{\partial t^l} g_{n,k}[M_{0}]\left(t,r\right) \frac{\partial^k}{\partial t^k} g_{n,k}[M_{1}]\left(r,s\right) dr = \displaystyle \int_{a}^{b} \frac{\partial^l}{\partial t^l} g_{n,k}[M_{1}]\left(t,r\right) \frac{\partial^k}{\partial t^k} g_{n,k}[M_{0}]\left(r,s\right)  dr .
	\end{equation}
Moreover, for all $t$, $s \in I$, $t \neq s$, the following equality is fulfilled:
		\begin{eqnarray}
		\label{e-integral-1-2-n}
		\displaystyle \int_{a}^{b} \frac{\partial^n}{\partial t^n} g_{n,k}[M_{0}]\left(t,r\right) \frac{\partial^k}{\partial t^k} g_{n,k}[M_{1}]\left(r,s\right) dr +\frac{\partial^k}{\partial t^k} g_{n,k}[M_{1}]\left(t,s\right)\\
		 = \displaystyle \int_{a}^{b} \frac{\partial^n}{\partial t^n} g_{n,k}[M_{1}]\left(t,r\right) \frac{\partial^k}{\partial t^k} g_{n,k}[M_{0}]\left(r,s\right)  dr +\frac{\partial^k}{\partial t^k} g_{n,k}[M_{0}]\left(t,s\right).\nonumber
	\end{eqnarray}
\end{corollary}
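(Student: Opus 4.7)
The plan is to exploit the fact that Theorem \ref{t-formula-M-derivatives} supplies two different representations of the same quantity $\frac{\partial^l}{\partial t^l}g_{n,k}[M_1](t,s)$. Both \eqref{e-green-M0-M1-derivative-1} and \eqref{e-green-M0-M1-derivative-2} express this left-hand side as $\frac{\partial^l}{\partial t^l}g_{n,k}[M_0](t,s)$ plus $(M_0-M_1)$ times one of the two integrals that appear in \eqref{e-integral-1-2}. Subtracting the two identities and cancelling the common term yields
\[
(M_0-M_1)\bigg[\int_a^b \frac{\partial^l}{\partial t^l}g_{n,k}[M_0](t,r)\frac{\partial^k}{\partial t^k}g_{n,k}[M_1](r,s)\,dr - \int_a^b \frac{\partial^l}{\partial t^l}g_{n,k}[M_1](t,r)\frac{\partial^k}{\partial t^k}g_{n,k}[M_0](r,s)\,dr\bigg]=0.
\]

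From here I split into two cases. If $M_0\neq M_1$, I divide by the nonzero scalar $(M_0-M_1)$ to obtain \eqref{e-integral-1-2}. If instead $M_0=M_1$, then $g_{n,k}[M_0]\equiv g_{n,k}[M_1]$ and the two integrals in \eqref{e-integral-1-2} are literally identical, so the identity is trivial.

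For \eqref{e-integral-1-2-n} I apply exactly the same cancellation to the pair \eqref{e-green-M0-M1-derivative-1-l=n} and \eqref{e-green-M0-M1-derivative-2-l=n}. The additional summands $(M_0-M_1)\frac{\partial^k}{\partial t^k}g_{n,k}[M_1](t,s)$ and $(M_0-M_1)\frac{\partial^k}{\partial t^k}g_{n,k}[M_0](t,s)$ that distinguish those formulas from the lower-order ones migrate to opposite sides of the resulting equality and, after dividing by $(M_0-M_1)$, reappear as the two boundary terms displayed in \eqref{e-integral-1-2-n}.

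The only delicate point, which I expect to be the main obstacle, is the value $l=n-1$ on the diagonal $t=s$: in this case Theorem \ref{t-formula-M-derivatives} only guarantees \eqref{e-green-M0-M1-derivative-1} and \eqref{e-green-M0-M1-derivative-2} for $t\neq s$. My plan is to handle this by passing to lateral limits $t\to s^{\pm}$: by the boundedness of the relevant partial derivatives recorded in the remark after Lemma \ref{l-regularity-Green-function-l}, both integrals in \eqref{e-integral-1-2} depend continuously on $t$ across the diagonal (the single-point jump of the integrand in the variable $r$ is immaterial by dominated convergence), so the identity extends to $t=s$ by continuity, which is precisely what the parenthetical remark in the statement refers to when it speaks of ``subtracting the lateral limits on the diagonal.''
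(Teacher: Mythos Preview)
Your proof is correct and follows exactly the approach the paper intends: the paper simply states that the corollary is obtained ``by identifying the equalities \eqref{e-green-M0-M1-derivative-1} and \eqref{e-green-M0-M1-derivative-2}, together \eqref{e-green-M0-M1-derivative-1-l=n} and \eqref{e-green-M0-M1-derivative-2-l=n},'' which is precisely your subtraction-and-cancellation argument. You have in fact supplied more detail than the paper does, explicitly treating the trivial case $M_0=M_1$ and justifying the continuity extension at $t=s$ for $l=n-1$ via dominated convergence.
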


As a direct consequence of Theorem \ref{t-formula-M-derivatives} we deduce the following monotony property with respect to the parameter $M$ of the $l$-th derivative of the Green's function related to problem $(P_{n,k}^M)$.

\begin{corollary}
	\label{co-signo1}
	On the conditions of Theorem \ref{t-formula-M}, given $k, \; l \in \{0,\ldots,n-1\}$ and $t,s \in I$ ($t \neq s$ if $l=n-1$), if $M_{0} \neq M_{1}$ we have that the following assertions are equivalent:
	\begin{enumerate}
		\item $\displaystyle\frac{\partial^l}{\partial t^l} g_{n,k}[M_{1}]\left(t,s\right) > \displaystyle\frac{\partial^l}{\partial t^l} g_{n,k}[M_{0}]\left(t,s\right)$.
		\item $ (M_0-M_1) \, \displaystyle \int_{a}^{b} \frac{\partial^l}{\partial t^l} g_{n,k}[M_{0}]\left(t,r\right) \frac{\partial^k}{\partial t^k} g_{n,k}[M_{1}]\left(r,s\right) dr >0$.
		\item $(M_0-M_1) \, \displaystyle \int_{a}^{b} \frac{\partial^l}{\partial t^l} g_{n,k}[M_{1}]\left(t,r\right) \frac{\partial^k}{\partial t^k} g_{n,k}[M_{0}]\left(r,s\right)  dr > 0$.	
	\end{enumerate}
	\end{corollary}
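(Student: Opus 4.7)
The plan is to reduce the corollary to a direct rearrangement of the two linking identities provided by Theorem \ref{t-formula-M-derivatives}, since once those identities are in hand the corollary is essentially an algebraic reading of them.

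First, I would rewrite \eqref{e-green-M0-M1-derivative-1} by transposing $\frac{\partial^l}{\partial t^l} g_{n,k}[M_{0}](t,s)$ to the left-hand side, obtaining the identity
\begin{equation*}
\frac{\partial^l}{\partial t^l} g_{n,k}[M_{1}](t,s)-\frac{\partial^l}{\partial t^l} g_{n,k}[M_{0}](t,s) = (M_{0}-M_{1}) \int_{a}^{b} \frac{\partial^l}{\partial t^l} g_{n,k}[M_{0}](t,r)\,\frac{\partial^k}{\partial t^k} g_{n,k}[M_{1}](r,s)\, dr.
\end{equation*}
Statement (1) is exactly the assertion that the left-hand side is strictly positive, and statement (2) is exactly the assertion that the right-hand side is strictly positive. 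Since the two sides are literally equal, the equivalence (1)$\Leftrightarrow$(2) is immediate. Note that the hypothesis $M_{0}\neq M_{1}$ is used only to guarantee that neither side is identically forced to zero by a trivial factor; the equivalence itself does not require it, but it is natural to state it in the hypothesis so that statements (2) and (3) involve a genuine sign comparison of the integral.

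Next, performing the identical manipulation on \eqref{e-green-M0-M1-derivative-2} yields the companion identity
\begin{equation*}
\frac{\partial^l}{\partial t^l} g_{n,k}[M_{1}](t,s)-\frac{\partial^l}{\partial t^l} g_{n,k}[M_{0}](t,s) = (M_{0}-M_{1}) \int_{a}^{b} \frac{\partial^l}{\partial t^l} g_{n,k}[M_{1}](t,r)\,\frac{\partial^k}{\partial t^k} g_{n,k}[M_{0}](r,s)\, dr,
\end{equation*}
which in the same way gives (1)$\Leftrightarrow$(3). Combining the two equivalences gives the full cycle (1)$\Leftrightarrow$(2)$\Leftrightarrow$(3).

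There is no real obstacle here; the only subtlety to point out is the restriction $t\neq s$ when $l=n-1$, which is inherited from Theorem \ref{t-formula-M-derivatives} and is due to the jump condition \eqref{condition-jump} forcing $\frac{\partial^{n-1}}{\partial t^{n-1}} g_{n,k}[M]$ to be discontinuous across the diagonal. For $l<n-1$ the identities hold on all of $I\times I$ by the continuity of $\frac{\partial^l}{\partial t^l} g_{n,k}[M]$ guaranteed by Definition \ref{d-Green-function}, so the equivalences are valid there without restriction. Essentially, this corollary is just a sign-reading of the two symmetric linking formulas already established, and the proof amounts to observing that (1), (2) and (3) are three ways of saying that the same real number is positive.
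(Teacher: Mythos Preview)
Your proof is correct and is exactly the argument the paper has in mind: the corollary is stated as a direct consequence of Theorem \ref{t-formula-M-derivatives}, and your rearrangement of \eqref{e-green-M0-M1-derivative-1} and \eqref{e-green-M0-M1-derivative-2} is precisely the intended reading. There is nothing to add.
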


Thus, assuming the same constant sign property of the $l$-th partial derivative with respect to $t$ of the Green's function for one parameter and the $k$-th partial derivative with respect to $t$ for the other one, we are in conditions to prove the following monotony decreasing property with respect to the parameter $M$.

\begin{theorem}
	\label{t-igual-signo-gm1-gm2} 	Assume that we are under the conditions of Theorem \ref{t-formula-M} and Lemma \ref{l-Green-function-l}.  Let $k, \; l \in \{0,\ldots,n-1\}$ and $(t,s) \in (a,b) \times (a,b)$ be fixed. Suppose that $M_{0}<M_{1}$ and  that one of the two following properties is fulfilled:
	\begin{enumerate}
		\item[$(i)$] Both $\displaystyle\frac{\partial^l}{\partial t^l} g_{n,k}[M_{0}]$ and $\displaystyle\frac{\partial^k}{\partial t^k} g_{n,k}[M_{1}]$ are either non negative or non positive on $I \times I$.
		\item[$(ii)$] Both $\displaystyle\frac{\partial^l}{\partial t^l} g_{n,k}[M_{1}]$ and $\displaystyle\frac{\partial^k}{\partial t^k} g_{n,k}[M_{0}]$ are either non negative or non positive on $I \times I$.
	\end{enumerate}
Then one of the three following properties holds:
\begin{enumerate}
	\item  $\displaystyle\frac{\partial^l}{\partial t^l} g_{n,k}[M_{1}]\left(t,s\right) = \displaystyle \frac{\partial^l}{\partial t^l} g_{n,k}[M_{0}]\left(t,s\right)=0$ if $t \in [a,s)$.
		\item  $\displaystyle\frac{\partial^l}{\partial t^l} g_{n,k}[M_{1}]\left(t,s\right) = \displaystyle \frac{\partial^l}{\partial t^l} g_{n,k}[M_{0}]\left(t,s\right)=0$ if $t \in (s,b]$.
		\item 
		$\displaystyle\frac{\partial^l}{\partial t^l} g_{n,k}[M_{1}]\left(t,s\right) < \displaystyle \frac{\partial^l}{\partial t^l} g_{n,k}[M_{0}]\left(t,s\right)$.
\end{enumerate}

\end{theorem}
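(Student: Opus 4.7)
The plan is to combine the linking formula of Theorem \ref{t-formula-M-derivatives} with the finiteness-of-zeros information in Corollary \ref{c-ceros-green}. Under hypothesis $(i)$ (hypothesis $(ii)$ is treated symmetrically, using \eqref{e-green-M0-M1-derivative-2} instead of \eqref{e-green-M0-M1-derivative-1}) and, without loss of generality, assuming both $\frac{\partial^l}{\partial t^l} g_{n,k}[M_0]$ and $\frac{\partial^k}{\partial t^k} g_{n,k}[M_1]$ are nonnegative on $I\times I$ (the ``both nonpositive'' branch yields the same integrand sign), the integrand
\[
F(r) := \frac{\partial^l}{\partial t^l} g_{n,k}[M_0](t,r)\, \frac{\partial^k}{\partial t^k} g_{n,k}[M_1](r,s)
\]
appearing in \eqref{e-green-M0-M1-derivative-1} is nonnegative on $[a,b]$. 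Since $M_0 - M_1 < 0$, I would read off
\[
\frac{\partial^l}{\partial t^l} g_{n,k}[M_1](t,s) \le \frac{\partial^l}{\partial t^l} g_{n,k}[M_0](t,s),
\]
and strict inequality already places us in conclusion 3.

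The remaining work is the analysis of equality, which forces $\int_a^b F(r)\, dr = 0$. Because $F$ is nonnegative and, by Lemma \ref{l-regularity-Green-function-l} together with the remark immediately after it, continuous on $I$ apart from at most the two points $r = t$ (if $l = n-1$) and $r = s$ (if $k = n-1$), the function $F$ vanishes identically on $I$ outside a finite set. I would then apply Corollary \ref{c-ceros-green}, with its parameter ``$l$'' replaced by $k$, to $v(r) := \frac{\partial^k}{\partial t^k} g_{n,k}[M_1](r,s)$: parts $(III)$ and $(IV)$ forbid $v$ from vanishing on both $[a,s)$ and $(s,b]$ at once, so on at least one of those subintervals, call it $J$, the function $v$ has only finitely many zeros and is therefore nonzero on a dense subset of $J$. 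On that dense subset, $F(r) = 0$ forces $\frac{\partial^l}{\partial t^l} g_{n,k}[M_0](t,r) = 0$; continuity of $r\mapsto \frac{\partial^l}{\partial t^l} g_{n,k}[M_0](t,r)$ on the relevant closed subtriangle propagates the zero to all of $\overline{J}$, so in particular $\frac{\partial^l}{\partial t^l} g_{n,k}[M_0](t,s) = 0$, and the assumed equality then yields $\frac{\partial^l}{\partial t^l} g_{n,k}[M_1](t,s) = 0$ as well. Depending on whether $t \in [a,s)$ or $t \in (s,b]$ we recover conclusion 1 or conclusion 2.

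I expect the main obstacle to be the regularity/density bookkeeping that converts ``$F = 0$ on a dense subset of $J$'' into ``$\frac{\partial^l}{\partial t^l} g_{n,k}[M_0](t,s) = 0$''. One has to handle the jump of $\frac{\partial^{n-1}}{\partial t^{n-1}} g_{n,k}[M_j]$ across the diagonal via the continuous extensions to the two closed subtriangles provided by Lemma \ref{l-regularity-Green-function-l} and the following remark; in fact, when $l = n-1$, the jump relation \eqref{condition-jump} is precisely what forces $t \notin J$ and therefore excludes $t = s$, explaining why the equality case distributes cleanly between conclusions 1 and 2. Parts $(III)$--$(IV)$ of Corollary \ref{c-ceros-green} must be invoked at exactly the right place to rule out the degenerate configuration where $v$ vanishes on all of $I\setminus\{s\}$; once this bookkeeping is in place, the three-way dichotomy of the statement is immediate.
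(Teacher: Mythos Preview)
Your argument is correct and rests on the same ingredients as the paper's proof---the linking identity \eqref{e-green-M0-M1-derivative-1}, the sign hypothesis on the two factors, and the zero-structure information of Corollary \ref{c-ceros-green}---but the organisation differs. The paper runs an exhaustive $3\times 3$ case split: it first classifies $r\mapsto \frac{\partial^k}{\partial t^k} g_{n,k}[M_1](r,s)$ into three patterns (cases $(a)$, $(b)$, $(c)$: positive on $I$ minus a finite set; positive on $[a,s)$ and identically zero on $(s,b]$; or the reverse), and inside each applies the analogous trichotomy to $r\mapsto \frac{\partial^l}{\partial t^l} g_{n,k}[M_0](t,r)$, then reads off the sign of the (restricted) integral in each of the nine subcases; the degenerate combinations $(b_3)$ with $t>s$ and $(c_2)$ with $t<s$ are exactly where conclusions~1 and~2 surface. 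Your route compresses this into ``non-strict inequality $+$ equality analysis'', forcing $F\equiv 0$ and then using density/continuity. One payoff of your organisation is that you invoke Corollary \ref{c-ceros-green} only for the first-variable factor $v(r)=\frac{\partial^k}{\partial t^k} g_{n,k}[M_1](r,s)$, which is precisely what that corollary covers, and need only continuity in $r$ for the other factor; the paper's inner trichotomy instead requires the same vanishes-on-a-half-interval-or-has-finitely-many-zeros alternative for $r\mapsto \frac{\partial^l}{\partial t^l} g_{n,k}[M_0](t,r)$ as a function of the \emph{second} argument, which is not literally the statement of Corollary \ref{c-ceros-green}.
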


\begin{proof}
We will make the proof for case $(i)$. Case $(ii)$ holds in an analogous way. Moreover, we will assume that both functions are non negative on $I \times I$. The arguments when they are non positive are the same. 

To prove the result we divide the proof in several steps. All the possibilities are deduced from Corollary \ref{c-ceros-green}.
 \begin{enumerate}
	\item[$(a)$]
 First we assume that $\displaystyle\frac{\partial^k}{\partial t^k} g_{n,k}[M_{1}]\left(\cdot,s\right)>0$ on $I\backslash \{A_s\}$, with $A_s \subset I$ a finite set.
	\begin{enumerate}
\item[$(a_1)$] If $\displaystyle\frac{\partial^l}{\partial t^l} g_{n,k}[M_{0}]\left(t,\cdot\right)>0$  on $I\backslash \{B_t\}$, with $B_t \subset I$ a finite set, the strict inequality follows immediately from \eqref{e-green-M0-M1-derivative-1}.
\item[$(a_2)$] If $\displaystyle\frac{\partial^l}{\partial t^l} g_{n,k}[M_{0}]\left(t,\cdot\right)>0$  on $[a,t)\backslash \{B_t\}$, with $B_t \subset I$ a finite set and vanishes on $(t,b]$. We have that \eqref{e-green-M0-M1-derivative-1} is written as 
\[\hspace{-2cm}
\frac{\partial^l}{\partial t^l}	g_{n,k}[M_{1}]\left(t,s\right)-\frac{\partial^l}{\partial t^l}g_{n,k}[M_{0}]\left(t,s\right)=\left(M_{0}-M_{1}\right) \displaystyle \int_{a}^{t}  \frac{\partial^l}{\partial t^l} g_{n,k}[M_{0}]\left(t,r\right) \frac{\partial^k}{\partial t^k} g_{n,k}[M_{1}]\left(r,s\right) dr <0 .
\]

\item[$(a_3)$] If $\displaystyle\frac{\partial^l}{\partial t^l} g_{n,k}[M_{0}]\left(t,\cdot\right)>0$  on $(t,b]\backslash \{B_t\}$, with $B_t \subset I$ a finite set and vanishes on $[a,t)$. We have that \eqref{e-green-M0-M1-derivative-1} is written as 
\[
\hspace{-2cm}\frac{\partial^l}{\partial t^l}	g_{n,k}[M_{1}]\left(t,s\right)-\frac{\partial^l}{\partial t^l}g_{n,k}[M_{0}]\left(t,s\right)=\left(M_{0}-M_{1}\right) \displaystyle \int_{t}^{b}  \frac{\partial^l}{\partial t^l} g_{n,k}[M_{0}]\left(t,r\right) \frac{\partial^k}{\partial t^k} g_{n,k}[M_{1}]\left(r,s\right) dr <0 .
\]
\end{enumerate}
	\item[$(b)$]
Now we assume that $\displaystyle\frac{\partial^k}{\partial t^k} g_{n,k}[M_{1}]\left(\cdot,s\right)>0$  on $[a,s)\backslash \{A_s\}$, with $A_s \subset I$ a finite set and vanishes on $(s,b]$.
\begin{enumerate}
	\item[$(b_1)$] If $\displaystyle\frac{\partial^l}{\partial t^l} g_{n,k}[M_{0}]\left(t,\cdot\right)>0$  on $I\backslash \{B_t\}$, with $B_t \subset I$ a finite set, then \eqref{e-green-M0-M1-derivative-1} gives us
	\[
\hspace{-2cm}	\frac{\partial^l}{\partial t^l}	g_{n,k}[M_{1}]\left(t,s\right)-\frac{\partial^l}{\partial t^l}g_{n,k}[M_{0}]\left(t,s\right)=\left(M_{0}-M_{1}\right) \displaystyle \int_{a}^{s}  \frac{\partial^l}{\partial t^l} g_{n,k}[M_{0}]\left(t,r\right) \frac{\partial^k}{\partial t^k} g_{n,k}[M_{1}]\left(r,s\right) dr <0 .
	\]
	\item[$(b_2)$] If $\displaystyle\frac{\partial^l}{\partial t^l} g_{n,k}[M_{0}]\left(t,\cdot\right)>0$  on $[a,t)\backslash \{B_t\}$, with $B_t \subset I$ a finite set and vanishes on $(t,b]$. We have that 
	\[
\hspace{-2cm}	\frac{\partial^l}{\partial t^l}	g_{n,k}[M_{1}]\left(t,s\right)-\frac{\partial^l}{\partial t^l}g_{n,k}[M_{0}]\left(t,s\right)=\left(M_{0}-M_{1}\right) \displaystyle \int_{a}^{\min{\{t,s\}}}  \frac{\partial^l}{\partial t^l} g_{n,k}[M_{0}]\left(t,r\right) \frac{\partial^k}{\partial t^k} g_{n,k}[M_{1}]\left(r,s\right) dr <0 ,
	\]
	\item[$(b_3)$] If $\displaystyle\frac{\partial^l}{\partial t^l} g_{n,k}[M_{0}]\left(t,\cdot\right)>0$  on $(t,b]\backslash \{B_t\}$, with $B_t \subset I$ a finite set and vanishes on $[a,t)$. We have that if $t\in (a,s)$ then
	\[
\hspace{-2cm}	\frac{\partial^l}{\partial t^l}	g_{n,k}[M_{1}]\left(t,s\right)-\frac{\partial^l}{\partial t^l}g_{n,k}[M_{0}]\left(t,s\right)=\left(M_{0}-M_{1}\right) \displaystyle \int_{t}^{s}  \frac{\partial^l}{\partial t^l} g_{n,k}[M_{0}]\left(t,r\right) \frac{\partial^k}{\partial t^k} g_{n,k}[M_{1}]\left(r,s\right) dr <0 .
	\]
	On the contrary, if $t\in (s,b)$, we conclude that
	\[
	\frac{\partial^l}{\partial t^l}	g_{n,k}[M_{1}]\left(t,s\right)=\frac{\partial^l}{\partial t^l}g_{n,k}[M_{0}]\left(t,s\right)=0.
	\]
	Thus, we are in the situation {\it 2.} of the enunciate.
\end{enumerate}
	\item[$(c)$]
Now we assume that $\displaystyle\frac{\partial^k}{\partial t^k} g_{n,k}[M_{1}]\left(\cdot,s\right)>0$  on $(s,b]\backslash \{A_s\}$, with $A_s \subset I$ a finite set and vanishes on $[a,s)$.
\begin{enumerate}
	\item[$(c_1)$] If $\displaystyle\frac{\partial^l}{\partial t^l} g_{n,k}[M_{0}]\left(t,\cdot\right)>0$  on $I\backslash \{B_t\}$, with $B_t \subset I$ a finite set, then \eqref{e-green-M0-M1-derivative-1} gives us
	\[
\hspace{-2cm}	\frac{\partial^l}{\partial t^l}	g_{n,k}[M_{1}]\left(t,s\right)-\frac{\partial^l}{\partial t^l}g_{n,k}[M_{0}]\left(t,s\right)=\left(M_{0}-M_{1}\right) \displaystyle \int_{s}^{b}  \frac{\partial^l}{\partial t^l} g_{n,k}[M_{0}]\left(t,r\right) \frac{\partial^k}{\partial t^k} g_{n,k}[M_{1}]\left(r,s\right) dr <0 .
	\]
		\item[$(c_2)$] If $\displaystyle\frac{\partial^l}{\partial t^l} g_{n,k}[M_{0}]\left(t,\cdot\right)>0$  on $[a,t)\backslash \{B_t\}$, with $B_t \subset I$ a finite set and vanishes on $(t,b]$. So, if $t \in (s,b)$, we have that 
	\[
\hspace{-2cm}	\frac{\partial^l}{\partial t^l}	g_{n,k}[M_{1}]\left(t,s\right)-\frac{\partial^l}{\partial t^l}g_{n,k}[M_{0}]\left(t,s\right)=\left(M_{0}-M_{1}\right) \displaystyle \int_s^{t}  \frac{\partial^l}{\partial t^l} g_{n,k}[M_{0}]\left(t,r\right) \frac{\partial^k}{\partial t^k} g_{n,k}[M_{1}]\left(r,s\right) dr <0 .
	\]
	Now, if $t\in (a,s)$, we conclude that
	\[
	\frac{\partial^l}{\partial t^l}	g_{n,k}[M_{1}]\left(t,s\right)=\frac{\partial^l}{\partial t^l}g_{n,k}[M_{0}]\left(t,s\right)=0,
	\]
	and we are in the situation {\it 1.} of the enunciate.
	
	\item[$(c_3)$] If $\displaystyle\frac{\partial^l}{\partial t^l} g_{n,k}[M_{0}]\left(t,\cdot\right)>0$  on $(t,b]\backslash \{B_t\}$, with $B_t \subset I$ a finite set and vanishes on $[a,t)$. We deduce that
	\[
\hspace{-2cm}	\frac{\partial^l}{\partial t^l}	g_{n,k}[M_{1}]\left(t,s\right)-\frac{\partial^l}{\partial t^l}g_{n,k}[M_{0}]\left(t,s\right)=\left(M_{0}-M_{1}\right) \displaystyle \int_{\max{\{s,t\}}}^{b}  \frac{\partial^l}{\partial t^l} g_{n,k}[M_{0}]\left(t,r\right) \frac{\partial^k}{\partial t^k} g_{n,k}[M_{1}]\left(r,s\right) dr <0 .
	\]
\end{enumerate}
\end{enumerate}
\end{proof}

\begin{remark}
	\label{r-k=n-1}
	Notice that in previous proof, case $(b)$ can be  only true if $k \in \{0, \ldots,n-2\}$. The case $k=n-1$ is not possible because of the jump condition \eqref{condition-jump}.
\end{remark}
\begin{remark}
	\label{r-k=l}
	We point out that the situations $(a_2)$ and $(a_3)$ can be true only if $k \neq l$.  We prove the case $(a_2)$. The other case is deduced in a similar way. 
	
	Thus, considering the case $(a_2)$ for $k=l$, we arrive at the following contradiction:
	\[0<\frac{\partial^k}{\partial t^k}	g_{n,k}[M_{1}]\left(t,s\right)<\frac{\partial^k}{\partial t^k}g_{n,k}[M_{0}]\left(t,s\right)=0, \qquad \mbox{for all $s\in (t,b]$}.
	\]
		 Similar arguments show us that the situation $k=l$ is not possible for the cases $(b_2)$ whenever $t \in (a,s)$ and $(c_3)$ if $t \in (s,b)$
\end{remark}

\begin{remark}
We point out that the particular case of $l=k=0$ and constant sign Green's functions has been proved in  \cite[Theorems 1.8.1 and 1.8.6]{C1} but, in such proofs, the obtained inequalities are non strict. 

Furthermore, the proof of the theorem \ref{t-igual-signo-gm1-gm2}  is simpler than the one given in that reference and provides much more additional information. It obviously covers a huge set of different situations.
\end{remark}

Arguing as in the proof of Theorem \ref{t-igual-signo-gm1-gm2}  we can prove the following result for different constant sign derivatives of the Green's functions.

\begin{theorem}
	\label{t-distinto-signo-gm1-gm2} 	Assume that we are under the conditions of Theorem \ref{t-formula-M} and Lemma \ref{l-Green-function-l}.  Let $k, \; l \in \{0,\ldots,n-1\}$ and $(t,s) \in (a,b) \times (a,b)$ be fixed. Suppose that $M_{0}<M_{1}$ and  that one of the two following properties is fulfilled:
	\begin{enumerate}
		\item[$(i)$] Both $\displaystyle\frac{\partial^l}{\partial t^l} g_{n,k}[M_{0}]$ and $\displaystyle\frac{\partial^k}{\partial t^k} g_{n,k}[M_{1}]$ have constant sign on $I \times I$. Moreover, one is non negative and the other one is non positive on $I \times I$.
		\item[$(ii)$] Both $\displaystyle\frac{\partial^l}{\partial t^l} g_{n,k}[M_{1}]$ and $\displaystyle\frac{\partial^k}{\partial t^k} g_{n,k}[M_{0}]$ have constant sign on $I \times I$. Moreover, one is non negative and the other one is non positive on $I \times I$.
	\end{enumerate}
	Then one of the two following properties holds:
	\begin{enumerate}
		\item  $\displaystyle\frac{\partial^l}{\partial t^l} g_{n,k}[M_{1}]\left(t,s\right) = \displaystyle \frac{\partial^l}{\partial t^l} g_{n,k}[M_{0}]\left(t,s\right)=0$ if $t \in [a,s)$.
		\item  $\displaystyle\frac{\partial^l}{\partial t^l} g_{n,k}[M_{1}]\left(t,s\right) = \displaystyle \frac{\partial^l}{\partial t^l} g_{n,k}[M_{0}]\left(t,s\right)=0$ if $t \in (s,b]$.
		\item 
		$\displaystyle\frac{\partial^l}{\partial t^l} g_{n,k}[M_{1}]\left(t,s\right) > \displaystyle \frac{\partial^l}{\partial t^l} g_{n,k}[M_{0}]\left(t,s\right)$.
	\end{enumerate}
\end{theorem}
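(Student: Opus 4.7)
The plan is to mirror the case analysis carried out in the proof of Theorem \ref{t-igual-signo-gm1-gm2} step by step, the only difference being the sign of the product appearing in the integrand of \eqref{e-green-M0-M1-derivative-1}. It suffices to treat case $(i)$, since case $(ii)$ is obtained from \eqref{e-green-M0-M1-derivative-2} by the symmetric argument; moreover one may assume without loss of generality that $\frac{\partial^l}{\partial t^l} g_{n,k}[M_0] \ge 0$ and $\frac{\partial^k}{\partial t^k} g_{n,k}[M_1] \le 0$ on $I \times I$ (the opposite sign pattern flips both factors and leaves their product unchanged).

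The key observation is that, under these hypotheses,
\[
\frac{\partial^l}{\partial t^l} g_{n,k}[M_0](t,r) \, \frac{\partial^k}{\partial t^k} g_{n,k}[M_1](r,s) \le 0, \quad r \in [a,b],
\]
so the integral in \eqref{e-green-M0-M1-derivative-1} is non-positive. Since $M_0 - M_1 < 0$, this yields
\[
\frac{\partial^l}{\partial t^l} g_{n,k}[M_1](t,s) - \frac{\partial^l}{\partial t^l} g_{n,k}[M_0](t,s) \ge 0,
\]
with strict inequality whenever the integrand is not almost everywhere zero as a function of $r$. To pinpoint when strictness holds I would run through the same nine subcases $(a_1)$--$(c_3)$ as in the proof of Theorem \ref{t-igual-signo-gm1-gm2}, using Corollary \ref{c-ceros-green} to describe the zero structure of $\frac{\partial^k}{\partial t^k} g_{n,k}[M_1](\cdot,s)$ and of $\frac{\partial^l}{\partial t^l} g_{n,k}[M_0](t,\cdot)$: each is either identically zero on $[a,s)$ (resp.~$(s,b]$) or has at most finitely many zeros there. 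In every configuration except two, the effective region of integration shrinks to a subinterval of $[a,b]$ of positive length on which the integrand is strictly negative outside a finite set; multiplication by $M_0 - M_1 < 0$ then produces the strict inequality of option 3.

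The escaping configurations are precisely those flagged in cases $(b_3)$ with $t \in (s,b)$ and $(c_2)$ with $t \in (a,s)$ of the previous proof: in them the integrand is identically zero on $[a,b]$ and both $\frac{\partial^l}{\partial t^l} g_{n,k}[M_0](t,s)$ and $\frac{\partial^l}{\partial t^l} g_{n,k}[M_1](t,s)$ vanish, delivering options 2 and 1 respectively. The main obstacle is therefore organizational rather than analytic: one must check, for each of the nine sign configurations, that the collapse of the integration domain matches exactly the situations described by options 1 and 2, so the trichotomy in the conclusion is exhaustive. No new ingredient beyond \eqref{e-green-M0-M1-derivative-1} and Corollary \ref{c-ceros-green} is required; the proof is a sign-flipped transcription of the preceding one.
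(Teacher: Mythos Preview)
Your proposal is correct and matches the paper's own approach: the paper does not give an independent proof of this theorem but simply states that one argues as in the proof of Theorem~\ref{t-igual-signo-gm1-gm2}. Your observation that the only change is the sign of the product in the integrand of \eqref{e-green-M0-M1-derivative-1}, together with the same nine-case analysis driven by Corollary~\ref{c-ceros-green} and the identification of the two degenerate configurations yielding options~1 and~2, is exactly what is intended.
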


\begin{remark}
	\label{r-signo-r=l}
	We point out that Remark \ref{r-k=l} and the analogous one to Remark \ref{r-k=n-1}  are valid for this situation too. 
\end{remark}

As a direct consequence of previous result, we arrive at the following one when $k=l$.

\begin{corollary}
	\label{l-distinto-signo-k=l} 	Assume that we are under the conditions of Theorem \ref{t-formula-M} and Lemma \ref{l-Green-function-l}. Then, if
$\displaystyle\frac{\partial^k}{\partial t^k} g_{n,k}[M_{0}] \ge 0 \ge \displaystyle\frac{\partial^k}{\partial t^k} g_{n,k}[M_{1}]$ on $I \times I$, we have that $M_1<M_0$.
\end{corollary}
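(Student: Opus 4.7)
The plan is to argue by contradiction: I would assume that the conclusion fails, i.e., $M_0\le M_1$, and derive a contradiction with the jump condition \eqref{condition-jump}.

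First I would dispose of the equality case $M_0=M_1$. Here both Green's functions coincide and the joint sign hypothesis $\partial^k/\partial t^k g_{n,k}[M_0]\ge 0\ge \partial^k/\partial t^k g_{n,k}[M_1]$ immediately forces $\partial^k/\partial t^k g_{n,k}[M_0]\equiv 0$ on $I\times I$, which is precisely the situation I will contradict below. For the remaining strict case $M_0<M_1$, I would invoke Theorem \ref{t-distinto-signo-gm1-gm2} with $l=k$: its hypothesis (i) is met, since $\partial^k/\partial t^k g_{n,k}[M_0]$ and $\partial^k/\partial t^k g_{n,k}[M_1]$ are both of constant sign with one non-negative and the other non-positive. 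Fixing any $(t,s)\in(a,b)\times(a,b)$ with $t\neq s$, one of the three alternatives in that theorem must hold; the strict-inequality alternative $\partial^k/\partial t^k g_{n,k}[M_1](t,s)>\partial^k/\partial t^k g_{n,k}[M_0](t,s)$ is precluded on the spot by the sign hypothesis (its left-hand side is $\le 0$ while its right-hand side is $\ge 0$), so one of the ``both vanish'' alternatives must occur. This forces $\partial^k/\partial t^k g_{n,k}[M_0](t,s)=0$, and the arbitrariness of $(t,s)$ yields that $\partial^k/\partial t^k g_{n,k}[M_0]$ vanishes on $(a,b)\times(a,b)$ away from the diagonal.

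To close the argument I would convert this vanishing into a contradiction with \eqref{condition-jump}, handling the two regularity regimes separately. If $k\le n-2$, the inclusion $g_{n,k}[M_0]\in C^{n-2}(I\times I)$ makes $\partial^k/\partial t^k g_{n,k}[M_0]$ continuous on $I\times I$; the vanishing then extends to the whole square, so for each fixed $s$ the function $g_{n,k}[M_0](\cdot,s)$ is a polynomial in $t$ of degree at most $k-1\le n-2$, its $(n-1)$-th partial derivative in $t$ is identically zero, and the unit jump in \eqref{condition-jump} becomes impossible. If $k=n-1$, I would instead take lateral limits of $\partial^{n-1}/\partial t^{n-1} g_{n,k}[M_0]$ at the diagonal from each open triangle; both are zero because the function is zero on each triangle, so the jump is $0$ rather than $1$, again contradicting \eqref{condition-jump}.

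The main obstacle is really just bookkeeping: confirming that the sign hypothesis forbids the strict-inequality alternative of Theorem \ref{t-distinto-signo-gm1-gm2} uniformly in $(t,s)$, and keeping the two regularity regimes $k\le n-2$ and $k=n-1$ straight when translating pointwise vanishing into a violation of the jump condition. No new analytic estimate enters; the corollary is essentially a short deduction from Theorem \ref{t-distinto-signo-gm1-gm2}.
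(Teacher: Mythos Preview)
Your proposal is correct and follows the paper's intended approach: the paper presents the corollary as an immediate consequence of Theorem~\ref{t-distinto-signo-gm1-gm2} without giving any further argument, and your contradiction argument makes precisely that deduction explicit. The final step could be shortened slightly by citing Corollary~\ref{c-ceros-green-1}, which already records that $\frac{\partial^l}{\partial t^l} g_{n,k}[M](\cdot,s)$ cannot vanish on both $[a,s)$ and $(s,b]$, but your direct appeal to the jump condition~\eqref{condition-jump} is equally valid.
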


\section{Parameter Set of Constant Sign Green's Function}
In this section, under suitable additional conditions on the sign of the Green's function related to the operator $T_{n,k}\left[M\right]$, we will study the monotony behavior of their $l$-th order derivatives with respect to $t$, for $l \in \{0, \ldots,n-1\}$ and $k=0$.

In the line of \cite[Section 1.8]{C1} we introduce the concepts of strongly positive and strongly negative Green's function as follows.
\begin{definition}
	We say that the Green's function related to problem $(P_{n,k}^M)$ is strongly positive in $I \times I$ if it satisfies the following property:
	\begin{enumerate}
		\item[$(P_g)$] There is a continuous function $\phi(t) >0$ for all $t \in (a,b)$ and $k_1, \; k_2 \in L^1(I)$, such that $0<k_1(s)<k_2(s)$ for a.e. $s \in I$, satisfying
		$$ \phi(t)\, k_1(s) \le  g_{n,k}[M](t,s) \le \phi(t)\, k_2(s), \quad \mbox{for a.e. } \; (t,s)  \in I \times I.$$
	\end{enumerate}
		We say that the Green's function related to problem $(P_{n,k}^M)$ is strongly negative in $I \times I$ if it satisfies the following property:
	\begin{enumerate}
		\item[$(N_g)$] There is a continuous function $\phi(t) >0$ for all $t \in (a,b)$ and $k_1, \; k_2 \in L^1(I)$, such that $k_1(s)<k_2(s)<0$ for a.e. $s \in I$, satisfying
		$$ \phi(t)\, k_1(s) \le  g_{n,k}[M](t,s) \le \phi(t)\, k_2(s), \quad \mbox{for a.e. } \; (t,s)  \in I \times I.$$
	\end{enumerate}
\end{definition}

Now, we say that $\lambda$ is an eigenvalue of problem $(P_{n,k}^M)$, if the following problem has a nontrivial solution:
\begin{equation}
	\label{e-eigenvalue}
	T_{n,k}\left[M\right] u\left(t\right)=\lambda u\left(t\right),\quad t\in I,\quad B_{i}\left(u\right)=0,\quad i=1,\ldots,n.
\end{equation}

Thus, considering $k=0$, for any $l \in \{0,\ldots,n-1\}$ be given, we study the dependence on $M$ of the $l$-th derivative  with respect to $t$ of the Green's function. To do this, we define  the sets of values in which the corresponding partial derivatives have constant sign on $I\times I$ as follows:

\begin{equation*}
	P_{l}=\{M\in \mathbb{R}:\;\; \frac{\partial^l}{\partial t^l}g_{n,0}[M]\left(t,s\right)\geq 0\;\; \forall \left(t,s\right)\in I\times I\}
\end{equation*} 
and
\begin{center}
	$N_{l}=\{M\in \mathbb{R}:\;\; \frac{\partial^l}{\partial t^l}g_{n,0}[M]\left(t,s\right)\leq 0 \quad \forall \left(t,s\right)\in I\times I \}.$
\end{center}

On the two following results, the case $l=k=0$ is described.

\begin{lemma}\cite[Lemma 1.8.33]{C1}
	\label{40}
	Let $\bar{M}$ be fixed. If problem $(P_{n,0}^{\bar{M}})$ has a unique solution for any $\sigma \in C(I)$ and its related Green's function $g_{n,0}[\bar{M}]$ satisfies condition $\left(P_g\right)$. Then, if the set $P_{0}$ is bounded from above, it is given as the interval $P_{0}=\left(\bar{M}-\lambda_{1},\bar{M}-\bar{\mu}\right]$, with $\lambda_{1}>0$ the smallest positive eigenvalue of problem $(P_{n,0}^{\bar{M}})$,  and $\bar{\mu}\leq 0$ such that problem $(P_{n,0}^{\bar{M}-\bar{\mu}})$  has a unique solution for any $\sigma \in C(I)$ and the related nonnegative Green's function, $g_{n,0}[\bar{M}-\bar{\mu}]$, vanishes at some points of the square $I\times I$.   
\end{lemma}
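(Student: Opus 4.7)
The plan is to (i) exploit the monotonicity just proved to show that $P_0$ is an interval containing $\bar{M}$, (ii) identify the right endpoint as the largest parameter at which the nonnegative Green's function still exists, and (iii) identify the left endpoint as the first parameter at which Fredholm uniqueness breaks down, which is exactly $\bar{M}-\lambda_1$.

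First I would apply Theorem~\ref{t-igual-signo-gm1-gm2} with $k=l=0$ (equivalently, Theorem~\ref{t-Green-dec}) to show that if $M_1,M_2\in P_0$ with $M_1<M_2$, then every $M\in(M_1,M_2)$ satisfies $0\le g_{n,0}[M_2]\le g_{n,0}[M]\le g_{n,0}[M_1]$ on $I\times I$, so $M\in P_0$. Hence $P_0$ is an interval along which $M\mapsto g_{n,0}[M]$ is nonincreasing, and it contains $\bar{M}$ by $(P_g)$. For the right endpoint, set $M^{*}:=\sup P_0$. The integral identity \eqref{e-green-M0-M1} makes $M\mapsto g_{n,0}[M]$ continuous in $M$ on every sub-interval free of eigenvalues, so the monotone limit $\lim_{M\uparrow M^{*}}g_{n,0}[M]$ is a nonnegative Green's function equal to $g_{n,0}[M^{*}]$, giving $M^{*}\in P_0$. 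Maximality forces $g_{n,0}[M^{*}]$ to vanish somewhere on $I\times I$: otherwise a further continuity step would yield $M^{*}+\epsilon\in P_0$ for small $\epsilon>0$. Writing $\bar{\mu}:=\bar{M}-M^{*}$, we have $\bar{\mu}\le 0$ and the claimed closed right endpoint.

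The left endpoint is dictated by Fredholm's alternative (Theorem~\ref{t-exis-green}). Rewriting \eqref{e-eigenvalue} as $T_{n,0}[\bar{M}-\lambda]u=0$, $B_i(u)=0$, the problem $(P^{M}_{n,0})$ fails uniqueness exactly when $\bar{M}-M$ is an eigenvalue of $(P^{\bar{M}}_{n,0})$. Hence for every $M\in(\bar{M}-\lambda_1,\bar{M}]$ the value $\bar{M}-M\in[0,\lambda_1)$ is not a positive eigenvalue (being strictly below $\lambda_1$), and equals $0$ only when $M=\bar{M}$, where uniqueness is assumed, so the Green's function exists throughout this range. A connectedness argument on
\[
S:=\{M\in(\bar{M}-\lambda_1,\bar{M}]:\; g_{n,0}[M]\ge 0\}
\]
now shows $S=(\bar{M}-\lambda_1,\bar{M}]$: the set is nonempty ($\bar{M}\in S$), closed by continuity of $M\mapsto g_{n,0}[M]$, and open because at any $M_0\in S$ the comparison $g_{n,0}[M_0]\ge g_{n,0}[\bar{M}]$ from Theorem~\ref{t-igual-signo-gm1-gm2} propagates the strong-positivity lower bound of $(P_g)$ to $g_{n,0}[M_0]$, which survives small perturbations of the parameter in \eqref{e-green-M0-M1}. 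Since $\bar{M}-\lambda_1\notin P_0$ (no Green's function exists there), the left endpoint is open, yielding $P_0=(\bar{M}-\lambda_1,\bar{M}-\bar{\mu}]$.

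The delicate step is the openness of $S$: one genuinely needs the hypothesis $(P_g)$ — rather than mere pointwise nonnegativity of $g_{n,0}[\bar{M}]$ — to be inherited along $S$, so that a small Neumann-type perturbation of \eqref{e-green-M0-M1} cannot flip the sign of the Green's function. This is the precise role played by the lower bound $\phi(t)k_1(s)$ supplied by $(P_g)$, and it is what ultimately allows the two endpoint identifications to close the description of $P_0$.
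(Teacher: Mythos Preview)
The paper does not prove this lemma at all: it is quoted verbatim from \cite[Lemma~1.8.33]{C1} and stated without proof, as a background result used to set up Theorems~\ref{t-dependencia-positiva} and~\ref{t-dependencia-negativa}. So there is no ``paper's own proof'' to compare your attempt against; any comparison would have to be with Cabada's monograph, not with the present manuscript.

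That said, your sketch follows the expected line (monotonicity of $M\mapsto g_{n,0}[M]$, continuity in $M$ via \eqref{e-green-M0-M1}, and identification of the left endpoint with the first eigenvalue), but two steps would need real work before it stands on its own. First, you never argue that $M^{*}=\sup P_0$ is not an eigenvalue: you take a monotone limit of Green's functions and declare it equal to $g_{n,0}[M^{*}]$, but the latter only makes sense once you know $(P^{M^{*}}_{n,0})$ is uniquely solvable; the honest argument is that the monotone limit is finite (sandwiched between $0$ and $g_{n,0}[M_1]$ for any fixed $M_1\in P_0$), and that this finite limit \emph{is} a Green's function, whence Fredholm gives uniqueness at $M^{*}$. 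Second, you assume the existence of a smallest positive eigenvalue $\lambda_1$ without justification; in \cite{C1} this comes from Krein--Rutman applied to the strongly positive compact operator with kernel $g_{n,0}[\bar{M}]$, which is exactly where hypothesis $(P_g)$ earns its keep. Your openness argument for $S$ is morally right but, as you yourself flag, is the delicate point and would need the $(P_g)$ lower bound made quantitative to survive a genuine $\epsilon$-perturbation.
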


\begin{lemma}\cite[Lemma 1.8.25]{C1}
		\label{40}
	Let $\bar{M}$ be fixed. If problem $(P_{n,0}^{\bar{M}})$ has a unique solution for any $\sigma \in C(I)$ and its related Green's function $g_{n,0}[\bar{M}]$ satisfies condition $\left(N_g\right)$. Then, if the set $N_{0}$ is bounded from below, it is given as the interval $N_{0}=\left[\bar{M}-\bar{\mu},\bar{M}-\lambda_{1}\right)$, with $\lambda_{1}<0$ the biggest negative eigenvalue of problem $(P_{n,0}^{\bar{M}})$,  and $\bar{\mu}\geq 0$ such that problem $(P_{n,0}^{\bar{M}-\bar{\mu}})$  has a unique solution for any $\sigma \in C(I)$ and the related nonpositive Green's function $g_{n,0}[\bar{M}-\bar{\mu}]$ vanishes at some points of the square $I\times I$.   
\end{lemma}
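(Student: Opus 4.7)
The plan is to mirror the method used for the positive case in the preceding lemma, exploiting Theorem~\ref{t-igual-signo-gm1-gm2} with $k=l=0$: whenever $M_0<M_1$ both lie in $N_0$, the two Green's functions are non-positive, so the conclusion reads $g_{n,0}[M_1]\le g_{n,0}[M_0]\le 0$ on $I\times I$. Thus $M\mapsto g_{n,0}[M]$ is pointwise non-increasing on $N_0$. Since $g_{n,0}[\bar M]$ satisfies $(N_g)$, we have $\bar M\in N_0$, and connectedness of $N_0$ follows from Theorem~\ref{t-Green-dec}: whenever $M_0<M_1$ lie in $N_0$, each $M\in(M_0,M_1)$ inherits unique solvability together with a non-positive Green's function, hence belongs to $N_0$.

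For the right endpoint, let $\varphi_1$ be an eigenfunction of $(P_{n,0}^{\bar M})$ associated with the biggest negative eigenvalue $\lambda_1<0$. Then $T_{n,0}[\bar M-\lambda_1]\varphi_1=T_{n,0}[\bar M]\varphi_1-\lambda_1\varphi_1=0$ with $B_i(\varphi_1)=0$, so by Theorem~\ref{t-exis-green} the problem $(P_{n,0}^{\bar M-\lambda_1})$ lacks unique solvability and $\bar M-\lambda_1\notin N_0$. Set $M^*:=\sup N_0$ and suppose, to get a contradiction, that $M^*<\bar M-\lambda_1$. Since $\lambda_1$ is the \emph{largest} negative eigenvalue of $(P_{n,0}^{\bar M})$, no eigenvalue lies in $(\lambda_1,0)$, hence $(P_{n,0}^M)$ has unique solvability for every $M\in(\bar M,\bar M-\lambda_1)$. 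Continuity of $M\mapsto g_{n,0}[M]$ on this range then places $M^*\in N_0$, with $g_{n,0}[M^*](t,s)\le g_{n,0}[\bar M](t,s)\le \phi(t)\,k_2(s)<0$ a.e.\ in $I\times I$. This strongly negative bound is stable under a further small perturbation of $M$, producing $M>M^*$ still in $N_0$, a contradiction. Hence $M^*=\bar M-\lambda_1$ and this value is not attained.

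For the left endpoint, $\bar\mu:=\bar M-\inf N_0\ge 0$ is finite by hypothesis. No eigenvalue of $(P_{n,0}^{\bar M})$ can lie in $(\bar M-\bar\mu,\bar M]$ (otherwise the corresponding interior value of $M$ would fail unique solvability, contradicting its membership in $N_0$), so a limiting argument via formula \eqref{e-green-M0-M1} applied to a sequence $M_j\downarrow \bar M-\bar\mu$ in $N_0$ yields $\bar M-\bar\mu\in N_0$. If $g_{n,0}[\bar M-\bar\mu]$ were strictly negative everywhere on $I\times I$, the same perturbation argument used at the right endpoint would extend $N_0$ further down, contradicting the infimum; hence it must vanish at some points of $I\times I$. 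Combining both endpoints with the connectedness established in the first paragraph yields $N_0=[\bar M-\bar\mu,\bar M-\lambda_1)$.

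The main obstacle I expect is the continuity/perturbation step used at both endpoints: one must ensure uniform boundedness of $M\mapsto g_{n,0}[M]$ on compact subsets of $(\bar M-\bar\mu,\bar M-\lambda_1)$ and that the strongly negative estimate in $(N_g)$ survives under the monotone ordering. Both follow from standard Fredholm theory applied to $L_n+M\,\mathrm{Id}$, so the intended argument is essentially spectral and monotone, not computational.
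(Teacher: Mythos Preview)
The paper does not contain a proof of this lemma: it is quoted as \cite[Lemma 1.8.25]{C1} and stated without argument, as background for Section~3. There is therefore no ``paper's own proof'' to compare your attempt against. The authors remark, just before Theorem~\ref{t-Green-dec}, that the proofs in \cite{C1} of the related results proceed via monotone iterative techniques and the method of lower and upper solutions; presumably Lemma~1.8.25 is handled the same way there.

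Your proposal takes a different route, trying to recover the lemma from the integral-identity machinery of Section~2 (Theorems~\ref{t-formula-M}, \ref{t-Green-dec}, and \ref{t-igual-signo-gm1-gm2}). The connectedness and monotonicity parts are fine. However, the endpoint arguments have genuine gaps. At the right endpoint you claim that the strongly negative bound in $(N_g)$ is ``stable under a further small perturbation of $M$'', but the monotone ordering $g_{n,0}[M^*]\le g_{n,0}[\bar M]\le \phi(t)k_2(s)$ gives you only the \emph{upper} half of an $(N_g)$-type estimate at $M^*$; you have not produced a lower bound $\phi(t)k_1(s)\le g_{n,0}[M^*]$, and without it the perturbation step (which needs strong negativity, not just a.e.\ negativity, to survive a small shift in $M$) is not justified. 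At the left endpoint you assert that a limiting argument along $M_j\downarrow \bar M-\bar\mu$ forces $\bar M-\bar\mu\in N_0$, but you have not ruled out that $\bar\mu$ is itself an eigenvalue of $(P_{n,0}^{\bar M})$, in which case $g_{n,0}[\bar M-\bar\mu]$ does not exist and the conclusion of the lemma---that $(P_{n,0}^{\bar M-\bar\mu})$ is uniquely solvable---would fail outright. Both issues can be repaired (the first via a Krein--Rutman/spectral-radius argument, the second by a blow-up analysis of the resolvent near an eigenvalue), but as written they are the missing ingredients.
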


To finalize this introductory properties, we present the following relation between the extremes of $N_T$ and $P_T$.
\begin{theorem}\cite[Theorem 1.8.36]{C1}
	\label{t-NT-PT-pos}
	Let $\bar{M} \in \R$ be such that problem $(P_{n,0}^{\bar{M}})$ has a unique solution for any $\sigma \in C(I)$ and its related Green's function $g_{n,0}[\bar{M}]$ satisfies condition $\left(P_g\right)$. If the interval $N_T$ is nonempty then $\sup{(N_0)}=\inf{(P_0)}$.
\end{theorem}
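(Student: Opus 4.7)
The plan is to establish the two inequalities $\sup N_0 \le \inf P_0$ and $\sup N_0 \ge \inf P_0$ separately. The first is immediate from Corollary \ref{l-distinto-signo-k=l} applied with $k=l=0$: fix arbitrary $M_P \in P_0$ and $M_N \in N_0$; then $g_{n,0}[M_P] \ge 0 \ge g_{n,0}[M_N]$ on $I\times I$, and the corollary yields $M_N < M_P$. Passing to $\sup$ over $N_0$ and $\inf$ over $P_0$ gives $\sup N_0 \le \inf P_0$.

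For the reverse inequality I would argue by contradiction, assuming $\alpha := \sup N_0 < \beta := \inf P_0$. Hypothesis $(P_g)$ forces $g_{n,0}[\bar M](t,s) \ge \phi(t)\, k_1(s) > 0$ a.e., so $\bar M \in P_0$ and $P_0$ is nonempty; combined with $N_0 \ne \emptyset$ it is also bounded from below. The lemma of \cite{C1} characterizing $P_0$ when $(P_g)$ holds (Lemma~1.8.33, cited just above) then gives $P_0 = (\bar M - \lambda_1,\, \bar M - \bar\mu]$, where $\lambda_1 > 0$ is the smallest positive eigenvalue of $(P_{n,0}^{\bar M})$ and $\bar\mu \le 0$. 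In particular $\beta = \bar M - \lambda_1$, and at $M = \beta$ problem \eqref{e-eigenvalue} admits a strongly positive principal eigenfunction $\varphi$.

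The crux of the argument is to rule out the possibility that the gap $(\alpha,\beta)$ is nonempty, that is, to show that for every $M \in (\alpha,\beta)$ one already has $g_{n,0}[M] \le 0$ on $I\times I$, forcing $M \in N_0$ and contradicting $M > \alpha$. To this end I would plug $M_0 = \bar M$ and $M_1 = M$ into the linking identity \eqref{e-green-M0-M1-2}:
\[
g_{n,0}[M](t,s) = g_{n,0}[\bar M](t,s) + (\bar M - M) \int_{a}^{b} g_{n,0}[M](t,r)\, g_{n,0}[\bar M](r,s)\, dr,
\]
which realises $g_{n,0}[M]$ as the resolvent of the strongly positive compact integral operator with kernel $g_{n,0}[\bar M]$, shifted by $(\bar M - M)$. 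A Krein--Rutman type analysis then shows that as $M \uparrow \beta$ through the gap, $g_{n,0}[M]$ blows up in the direction $-\varphi$, yielding $g_{n,0}[M] \le 0$ for $M$ sufficiently close to $\beta$.

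The main obstacle is exactly this spectral crossing argument: making rigorous, using only the cone property $(P_g)$ and the resolvent formula above, that the sign of $g_{n,0}[M]$ flips across the principal eigenvalue $\beta$. This is precisely the technical content of \cite[Theorem~1.8.36]{C1}, which supplies the proof that is cited here, whereas the contribution of the present paper is the much simpler proof of the monotonicity step underlying the first inequality.
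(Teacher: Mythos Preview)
The paper does not prove this theorem at all: it is quoted verbatim as \cite[Theorem~1.8.36]{C1} and no argument is supplied. So there is no ``paper's own proof'' to compare against, and you correctly identify this in your last paragraph.

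Your derivation of the inequality $\sup N_0 \le \inf P_0$ from Corollary~\ref{l-distinto-signo-k=l} with $k=l=0$ is valid and is a genuine application of the paper's new machinery (the linking formula of Theorem~\ref{t-formula-M}); this is a pleasant observation, since in \cite{C1} that inequality was obtained by quite different means (lower/upper solutions and monotone iteration). One small point: to invoke the corollary you should note that $P_0\cap N_0=\emptyset$ (a Green's function identically zero would violate the jump condition \eqref{condition-jump}), so the strict inequality $M_N<M_P$ is not vacuous.

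For the reverse inequality your sketch is honest but incomplete: the Krein--Rutman ``sign flip across the principal eigenvalue'' is exactly the nontrivial part, and you do not carry it out. Since this is precisely the content of the cited theorem in \cite{C1}, and the present paper makes no claim to reprove it, there is nothing further to compare.
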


\begin{theorem}\cite[Theorem 1.8.35]{C1}
	\label{t-NT-PT}
	Let $\bar{M} \in \R$ be such that problem $(P_{n,0}^{\bar{M}})$ has a unique solution for any $\sigma \in C(I)$ and its related Green's function $g_{n,0}[\bar{M}]$ satisfies condition $(N_g)$. If the interval $P_T$ is nonempty then $\sup{(N_0)}=\inf{(P_0)}$.
\end{theorem}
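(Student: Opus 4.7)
My plan is to follow the blueprint of the companion result Theorem \ref{t-NT-PT-pos}, with the roles of positivity/negativity and of $(P_g)/(N_g)$ swapped, using the linking identity \eqref{e-green-M0-M1} as the main driver. The first step, $\sup(N_0)\le\inf(P_0)$, is immediate from Corollary \ref{l-distinto-signo-k=l} specialised to $k=l=0$: any $M_0\in P_0$ and $M_1\in N_0$ satisfy $g_{n,0}[M_0]\ge 0\ge g_{n,0}[M_1]$ on $I\times I$, so the corollary forces $M_1<M_0$, and passing to the sup in $M_1$ and the inf in $M_0$ gives the bound.

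For the reverse inequality I would first characterise $N_0$ as an interval. The strongly negative hypothesis $(N_g)$ gives $g_{n,0}[\bar M]\le \phi\,k_2<0$ pointwise, so $\bar M\in N_0$; combined with the monotone dependence in $M$ (Theorem \ref{t-Green-dec}) this lets me push $M$ downward from $\bar M$ with $g_{n,0}[M]$ staying of constant negative sign, until the first parameter where $g_{n,0}[M]$ touches zero somewhere on $I\times I$. Hence $N_0$ is bounded from below, and the second of the two Lemmas numbered~40 in the excerpt identifies $N_0=[\bar M-\bar\mu,\bar M-\lambda_1)$, where $\lambda_1<0$ is the largest negative eigenvalue of $(P_{n,0}^{\bar M})$. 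In particular $\sup(N_0)=\bar M-\lambda_1$, and by the previous paragraph $\inf(P_0)\ge\bar M-\lambda_1$.

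To close the gap I would argue by contradiction that no $M^*\in(\bar M-\lambda_1,\inf P_0)$ can exist. For such an $M^*$, problem $(P_{n,0}^{M^*})$ admits a Green's function since $\lambda_1$ was the largest negative eigenvalue below $\bar M$, yet $g_{n,0}[M^*]$ would have mixed sign. Inserting $M_0=\bar M$, $M_1=M^*$ into \eqref{e-green-M0-M1} rewrites this as a resolvent equation at parameter $\bar M-M^*$ for the compact integral operator $Tf(t)=\int_a^b g_{n,0}[\bar M](t,r)f(r)\,dr$; the two-sided kernel bound $\phi(t)k_1(s)\le g_{n,0}[\bar M](t,s)\le\phi(t)k_2(s)$ supplied by $(N_g)$ makes $-T$ a strongly positive compact operator, so by a Krein--Rutman argument it has a simple principal eigenvalue whose eigenfunction is strictly sign-definite, and that principal eigenvalue must be $|\lambda_1|^{-1}$. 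A mixed-sign $g_{n,0}[M^*](\cdot,s)$ is then incompatible with being the resolvent of $-T$ at a parameter strictly below the principal threshold, giving the contradiction. The main obstacle is precisely this last spectral step: converting the kernel estimate of $(N_g)$ into enough positivity of $T$ to apply Krein--Rutman, and then identifying its principal eigenvalue with the $\lambda_1$ furnished by Lemma~40. Once that identification is in place, the gap-closing is automatic and the rest of the argument is a direct repackaging of tools already developed in the paper.
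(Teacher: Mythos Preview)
The paper does not contain a proof of this statement: Theorem~\ref{t-NT-PT} is quoted verbatim from \cite[Theorem~1.8.35]{C1} as one of the ``introductory properties'' at the start of Section~3, and no argument is supplied here. There is therefore nothing in the present paper to compare your proposal against; the actual proof lives in the cited monograph.

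On the substance of your plan: the first step (using Corollary~\ref{l-distinto-signo-k=l} at $k=l=0$ to get $\sup N_0\le\inf P_0$) is clean and correct. The second step, however, is only a sketch, and the part you yourself flag as the ``main obstacle'' is a genuine gap. You assert that for $M^*>\bar M-\lambda_1$ the Green's function $g_{n,0}[M^*]$ still exists ``since $\lambda_1$ was the largest negative eigenvalue below $\bar M$'', but the characterisation of $\lambda_1$ only controls parameters in $(\bar M,\bar M-\lambda_1)$; it says nothing about resonance beyond $\bar M-\lambda_1$. More seriously, the Krein--Rutman step is doing all the work and is not carried out: you need to show that $(N_g)$ makes $-T$ strongly positive on an appropriate cone, that its spectral radius equals $|\lambda_1|^{-1}$, and that the resolvent at a parameter past the principal threshold cannot remain sign-definite while one at a parameter strictly below cannot be of mixed sign. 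None of those facts is established, and the last one in particular is exactly the content of the theorem. As written, the proposal is a plausible outline but not a proof.
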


\begin{theorem}
	\label{t-dependencia-positiva}
	Assume that $g_{n,0}[\bar{M}]$ satisfies condition $\left(P_g\right)$ for some $\bar{M} \in \R$. By denoting $P_0=(\bar{M}_0,\bar{M}_1]$ if it is bounded from above, and $P_0=(\bar{M}_0,+\infty)$ otherwise, we have that the two following properties are fulfilled:
	\begin{enumerate}
		\item[$(i)$] Suppose that $P_l \neq \emptyset$ for some $l \in \{1,\ldots,n-1\}$ fixed and there is $M_1 \in P_l$ such that $M_1> \bar{M}_0$. Then $\inf{(P_l)}=\inf{(P_0)}=\bar{M}_0$. Moreover, we have that  function $\frac{\partial^l}{\partial t^l}g_{n,0}[M]\left(t,s\right)$ is nonincreasing with respect to $M \in (\bar{M}_0,\min{\{\sup{(P_0)},\sup{(P_l)}\}}]$. If both supreme are equal to $+\infty$ the previous interval is open on the right extreme.
			\item[$(ii)$] Suppose that $N_l \neq \emptyset$ for some $l \in \{1,\ldots,n-1\}$ fixed and there is $M_1 \in N_l$ such that $M_1> \bar{M}_0$. Then $\inf{(N_l)}=\inf{(P_0)}=\bar{M}_0$. Moreover, we have that  function $\frac{\partial^l}{\partial t^l}g_{n,0}[M]\left(t,s\right)$ is nondecreasing with respect to $M \in (\bar{M}_0,\min{\{\sup{(P_0)},\sup{(P_l)}\}}]$. If both supreme are equal to $+\infty$ the previous interval is open on the right extreme.
	\end{enumerate}
\end{theorem}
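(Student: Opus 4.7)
The plan is to reduce everything to the comparison results already proved in the previous section, specialised to $k=0$. The key observation is that with $k=0$ the ``other'' factor in Theorem~\ref{t-igual-signo-gm1-gm2} and Theorem~\ref{t-distinto-signo-gm1-gm2} is $g_{n,0}[M]$ itself (the $0$-th derivative), so constant sign information about $g_{n,0}[M]$ on $P_0$ couples directly with constant sign information about $\frac{\partial^{l}}{\partial t^{l}}g_{n,0}[M]$ on $P_l$ or $N_l$. I treat $(i)$ in detail; $(ii)$ is the mirror image obtained by exchanging Theorem~\ref{t-igual-signo-gm1-gm2} for Theorem~\ref{t-distinto-signo-gm1-gm2}.

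For $(i)$, I would first prove simultaneously the inclusion $(\bar M_{0},\min\{\sup P_{0},\sup P_{l}\}]\subseteq P_{l}$ and the nonincreasing character of $M\mapsto\frac{\partial^{l}}{\partial t^{l}}g_{n,0}[M](t,s)$. Fix $M_{0}<M_{1}$ with $M_{0}\in(\bar M_{0},\sup P_{0}]$ (so $M_{0}\in P_{0}$ by the interval description of $P_{0}$ supplied by Lemma~\ref{40}, hence $g_{n,0}[M_{0}]\ge 0$) and $M_{1}\in P_{l}$ (so $\frac{\partial^{l}}{\partial t^{l}}g_{n,0}[M_{1}]\ge 0$). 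Hypothesis $(ii)$ of Theorem~\ref{t-igual-signo-gm1-gm2} with $k=0$ is satisfied, and its trichotomy yields at every $(t,s)\in I\times I$
\[
0\le\frac{\partial^{l}}{\partial t^{l}}g_{n,0}[M_{1}](t,s)\le\frac{\partial^{l}}{\partial t^{l}}g_{n,0}[M_{0}](t,s),
\]
which simultaneously shows $M_{0}\in P_{l}$ and the monotone behaviour in $M$. Applying this with the given seed $M_{1}\in P_{l}$, $M_{1}>\bar M_{0}$, and letting $M_{0}\searrow\bar M_{0}$ yields $\inf P_{l}\le\bar M_{0}$, while iterating the argument up to $\min\{\sup P_{0},\sup P_{l}\}$ delivers the full inclusion and the monotonicity on the stated interval.

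The delicate part is the reverse inequality $\inf P_{l}\ge\bar M_{0}$, which is the main obstacle because Theorem~\ref{t-igual-signo-gm1-gm2} by itself does not prevent $\frac{\partial^{l}}{\partial t^{l}}g_{n,0}[M]$ from being nonnegative at some $\tilde M<\bar M_{0}$. My plan is to argue by contradiction, exploiting the characterization of $\bar M_{0}$ given by Lemma~\ref{40}: $\bar M_{0}=\bar M-\lambda_{1}$ where $\lambda_{1}>0$ is the smallest positive eigenvalue of $(P_{n,0}^{\bar M})$, so $g_{n,0}[M]$ fails to exist at $M=\bar M_{0}$ and blows up as $M\to\bar M_{0}^{+}$. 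If some $\tilde M\in P_{l}$ satisfied $\tilde M<\bar M_{0}$, then, plugging $M_{0}=\tilde M$ and any $M_{1}\in P_{0}$ into formula \eqref{e-green-M0-M1-derivative-1} and letting $M_{1}\searrow\bar M_{0}$, the left-hand side $\frac{\partial^{l}}{\partial t^{l}}g_{n,0}[M_{1}]$ would inherit the blow-up of $g_{n,0}[M_{1}]$ in appropriate directions, while the right-hand side is controlled by $\frac{\partial^{l}}{\partial t^{l}}g_{n,0}[\tilde M]$ times a bounded integral against the nonnegative functions $g_{n,0}[M_{1}]$ and $\frac{\partial^{l}}{\partial t^{l}}g_{n,0}[\tilde M]$, which contradicts the divergence. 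Part $(ii)$ is carried out identically: the monotonicity and the inclusion $(\bar M_{0},\min\{\sup P_{0},\sup N_{l}\}]\subseteq N_{l}$ follow from Theorem~\ref{t-distinto-signo-gm1-gm2}$(ii)$ with $k=0$, whose conclusion $\frac{\partial^{l}}{\partial t^{l}}g_{n,0}[M_{1}](t,s)>\frac{\partial^{l}}{\partial t^{l}}g_{n,0}[M_{0}](t,s)$ (or the vanishing alternatives) forces $\frac{\partial^{l}}{\partial t^{l}}g_{n,0}[M_{0}]\le\frac{\partial^{l}}{\partial t^{l}}g_{n,0}[M_{1}]\le 0$, i.e.\ $M_{0}\in N_{l}$ and nondecreasing dependence, and the lower bound $\inf N_{l}\ge\bar M_{0}$ is handled by the same blow-up argument.
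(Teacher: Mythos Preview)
Your overall architecture coincides with the paper's: use the linking formula \eqref{e-green-M0-M1-derivative-2} (or equivalently Theorem~\ref{t-igual-signo-gm1-gm2} with $k=0$) to push nonnegativity of $\frac{\partial^l}{\partial t^l}g_{n,0}[M_1]$ down to any $M_0\in(\bar M_0,M_1)\cap P_0$, thereby getting the inclusion and the monotonicity; then argue by contradiction at the eigenvalue $\bar M_0$ using the blow-up of $g_{n,0}[M]$ as $M\to\bar M_0^+$.

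The gap is in your description of the blow-up contradiction. You write that the left-hand side $\frac{\partial^l}{\partial t^l}g_{n,0}[M_1]$ ``would inherit the blow-up of $g_{n,0}[M_1]$'' while the right-hand side is ``controlled by \dots\ a bounded integral''. Both claims are wrong. There is no a priori reason the $l$-th derivative inherits any blow-up from the Green's function itself; and the integral on the right contains $g_{n,0}[M_1]$, so it is certainly not bounded as $M_1\to\bar M_0^+$. The contradiction actually runs the other way. With your choice $M_0=\tilde M<\bar M_0<M_1$ in \eqref{e-green-M0-M1-derivative-1} (or equivalently the paper's choice in \eqref{e-green-M0-M1-derivative-2}), the prefactor $\tilde M-M_1$ is negative and the integrand $\frac{\partial^l}{\partial t^l}g_{n,0}[\tilde M](t,r)\,g_{n,0}[M_1](r,s)$ is nonnegative, so the right-hand side is driven to $-\infty$ as $M_1\to\bar M_0^+$; this is incompatible with the left-hand side being $\ge 0$, which you already established for $M_1$ slightly above $\bar M_0$.

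To make this rigorous you still need two ingredients that you do not mention. First, you must know that $g_{n,0}[M_1]\to+\infty$ on a set of \emph{positive measure} as $M_1\to\bar M_0^+$; the paper deduces this from the monotonicity of $M\mapsto g_{n,0}[M]$ on $P_0$ together with the fact that $\bar M_0$ is an eigenvalue, so no finite limit can exist a.e.\ (else one would produce a Green's function at $\bar M_0$). Second, you need strict positivity of $\frac{\partial^l}{\partial t^l}g_{n,0}[\tilde M](t,\cdot)$ on an interval of positive length (so the integral really diverges); the paper obtains this from the strict-inequality alternative of Theorem~\ref{t-igual-signo-gm1-gm2}, using that $g_{n,0}[\bar M]$ satisfies $(P_g)$ and hence is strictly positive a.e. After integrating in $s$ these two facts force $\int_a^b\frac{\partial^l}{\partial t^l}g_{n,0}[M_1](t,s)\,ds<0$ for $M_1$ close to $\bar M_0$, contradicting $M_1\in P_l$.
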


\begin{proof}
We will prove the case $(i)$. Case $(ii)$ holds in an analogous way.

We will assume that both sets are bounded from above. Thus, we denote $\tilde{M}_l=\sup{\{P_l\}} \in \R$. The other situations hold analogously.

So, let $M_0 \in P_0$ be such that $M_0<M_1$.  Thus, as an immediate application of identity \eqref{e-green-M0-M1-derivative-2} for $k=0$, we conclude that
\[
0\le \frac{\partial^l}{\partial t^l}g_{n,0}[M_{1}]\left(t,s\right)\le\frac{\partial^l}{\partial t^l}g_{n,0}[M_{0}]\left(t,s\right) \quad \mbox{for all $(t,s) \in I \times I$}.
\]

In particular, we have that $M_0 \in P_l$, and, as a consequence, $P_0 \cap P_l \neq \emptyset$.

Since this argument is valid for all $M_1 \in P_l$ such that $M_1>\bar{M}_0$ and $M_0 \in (\bar{M}_0,M_1) \cap P_0$, we conclude that $(\bar{M}_0,\min{\{\bar{M}_1,\tilde{M}_l\}}] \subset P_l$.

Now, taking $\bar{M}_0 <\tilde{M}_0<\tilde{M}_1<\min{\{\bar{M}_1,\tilde{M}_l\}}$, using again \eqref{e-green-M0-M1-derivative-2} for $k=0$, we arrive at
\[
0\le \frac{\partial^l}{\partial t^l}g_{n,0}[\tilde{M}_1]\left(t,s\right)\le\frac{\partial^l}{\partial t^l}g_{n,0}[\tilde{M}_0]\left(t,s\right), \quad \mbox{for all $(t,s) \in I \times I$}
\]
and we have that the $\frac{\partial^l}{\partial t^l}g_{n,0}[M]\left(t,s\right)$ is nonincreasing with respect to $M \in (\bar{M}_0,\min{\{\bar{M}_1,\tilde{M}_l\}}]$.\\

To see that $\inf{(P_l)}=\inf{(P_0)}=\bar{M}_0$, assume, on the contrary, that there is $M_2 \in P_l$ such that $M_2<\bar{M}_0$.  

First, since $g_{n,0}[\bar{M}]$ satisfies condition $(P_g)$ we have, in particular, that $g_{n,0}[\bar{M}](t,s)>0$ for a.e. $(t,s) \in I\times I$. So, arguing as in the proof of Theorem \ref{t-igual-signo-gm1-gm2}, $(a)$, for $k=l=0$, and working with equality \eqref{e-green-M0-M1-derivative-2}, we deduce that $g_{n,0}[M]$ is strictly decreasing with respect to $M \in (\bar{M}_0,\bar{M}]$ and, as a consequence, $g_{n,0}[M](t,s)>0$ for all $(t,s) \in (a,b)\times (a,b)$ on $(\bar{M}_0,\bar{M}]$.

In the same way, arguing again as in the proof of Theorem \ref{t-igual-signo-gm1-gm2}, $(a)$, for $k=0$, we conclude that  
\begin{equation}
	\label{des-gM-M<M0bar}
	\frac{\partial^l}{\partial t^l}g_{n,0}[M_{2}]\left(t,s\right)>\frac{\partial^l}{\partial t^l}g_{n,0}[M]\left(t,s\right)\ge 0, \; \mbox{for all $(t,s) \in (a,b) \times (a,b)$ and  $M \in (\bar{M}_0,\bar{M}_1]$.}
\end{equation}

On the other hand, since $\bar{M}_0$ is an eigenvalue of our problem, we have that the Green's function $g_{n,0}[\bar{M}_0]$ does not exist, see \cite[Theorem 1.2.10]{C1} for details. 

As a consequence, from the fact that $g_{n,0}[M]$ is nonincreasing on $(\bar{M}_0,\bar{M}_1]$, we have that there exists $\lim_{M \to \bar{M}_0^+}g_{n,0}[M](t,s)$ for all $(t,s) \in I \times I$, and it is equals to $+\infty$ on a  set of positive Lebesgue measure  ${\cal{A}} \in I \times I$. Indeed, if the limit is real for a.e. $(t,s) \in I \times I$, we have that problem $(P_{n,0}^{\bar{M}_0})$ has at least one solution for any $\sigma \in C(I)$. Now, using \cite[Theorem 1.2.10]{C1} again, we deduce that the solution is unique and, therefore, $\bar{M}_0$ is not an eigenvalue of our problem.

Now, integrating in $[a,b]$ with respect to $s$ in \eqref{e-green-M0-M1-derivative-2}, we have that
	\begin{eqnarray*}
-	\int_a^b{\frac{\partial^l}{\partial t^l}g_{n,0}[M_{0}]\left(t,s\right) \,ds}&=&-\int_a^b{\frac{\partial^l}{\partial t^l}	g_{n,0}[M_{2}]\left(t,s\right)\, ds}\\
	&& +\left(M_{0}-M_{2}\right) \displaystyle \int_{a}^{b} \int_{a}^{b} \frac{\partial^l}{\partial t^l} g_{n,0}[M_{2}]\left(t,r\right) g_{n,0}[M_{0}]\left(r,s\right)  dr ds\\
	&\ge &-\int_a^b{\frac{\partial^l}{\partial t^l}	g_{n,0}[M_{2}]\left(t,s\right)\, ds}\\
	&& +\left(M_{0}-M_{1}\right) \displaystyle \int_{\cal{A}} \frac{\partial^l}{\partial t^l} g_{n,0}[M_{2}]\left(t,r\right) g_{n,0}[M_{0}]\left(r,s\right)  dr ds.
\end{eqnarray*}

So, since $M_2 \in P_l$ is fixed, the strictly positive sign of the considered Green's functions in the integral implies that the limit when $M_0 \to \bar{M}_0^+$ of the right hand side of previous equation is equals to $+\infty$. But, such property says us that for $M_0>\bar{M}_0$, close enough to $\bar{M}_0$, the function $\frac{\partial^l}{\partial t^l}g_{n,0}[M_{0}]\left(t,s\right)$ takes strictly negative values at some points in $I \times I$, which contradicts the first part of the proof. 
\end{proof}

In an analogous way, one can prove the following result.
\begin{theorem}
		\label{t-dependencia-negativa}
	Assume that $g_{n,0}[\bar{M}]$ satisfies condition $\left(N_g\right)$ for some $\bar{M} \in \R$. By denoting $N_0=[\bar{M}_1,\bar{M}_0)$ if it is bounded from above, and $N_0=(-\infty,\bar{M}_0)$ otherwise, we have that the two following properties are satisfied:
	\begin{enumerate}
		\item[$(i)$] Suppose that $N_l \neq \emptyset$ for some $l \in \{1,\ldots,n-1\}$ fixed and there is $M_1 \in N_l$ such that $M_1< \bar{M}_0$. Then $\sup{(N_l)}=\sup{(N_0)}=\bar{M}_0$. Moreover, we have that  function $\frac{\partial^l}{\partial t^l}g_{n,0}[M]\left(t,s\right)$ is nonincreasing with respect to $M \in [\max{\{\inf{(N_0)},\inf{(N_l)}\}},\bar{M}_0)$. If both infimum are equal to $-\infty$ the previous interval is open on the left extreme.
		\item[$(ii)$] Suppose that $P_l \neq \emptyset$ for some $l \in \{1,\ldots,n-1\}$ fixed and there is $M_1 \in P_l$ such that $M_1< \bar{M}_0$. Then $\sup{(P_l)}=\sup{(N_0)}=\bar{M}_0$. Moreover, we have that  function $\frac{\partial^l}{\partial t^l}g_{n,0}[M]\left(t,s\right)$ is nondecreasing with respect to $M \in [\max{\{\inf{(N_0)},\inf{(N_l)}\}},\bar{M}_0)$. If both infimum are equal to $-\infty$ the previous interval is open on the left extreme.
	\end{enumerate}
\end{theorem}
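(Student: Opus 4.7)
The plan is to transcribe the proof of Theorem \ref{t-dependencia-positiva} with the direction reversed and the signs flipped according to the strongly negative hypothesis $(N_g)$: instead of approaching $\inf(P_0)=\bar{M}_0$ from above with nonnegative Green's functions, we approach $\sup(N_0)=\bar{M}_0$ from below with nonpositive ones. Throughout, the key identity is \eqref{e-green-M0-M1-derivative-2} specialized to $k=0$, and the role of condition $(P_g)$ is played by $(N_g)$.

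\textbf{Case $(i)$, monotonicity.} Fix $M_1\in N_l$ with $M_1<\bar{M}_0$ and pick any $M_0\in N_0$ with $M_1<M_0<\bar{M}_0$. In identity \eqref{e-green-M0-M1-derivative-2} the factor $M_0-M_1>0$ multiplies the product of the nonpositive functions $\frac{\partial^l}{\partial t^l}g_{n,0}[M_1](t,r)$ and $g_{n,0}[M_0](r,s)$, so the correction term is nonnegative and
\[
\frac{\partial^l}{\partial t^l}g_{n,0}[M_0](t,s)\le \frac{\partial^l}{\partial t^l}g_{n,0}[M_1](t,s)\le 0,
\]
whence $M_0\in N_l$. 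Applying the same computation to any two points $M<M'$ of the interval $[\max\{\inf(N_0),\inf(N_l)\},\bar{M}_0)$ (which, by the previous inclusion together with a limiting argument on the infima, both lie in $N_0\cap N_l$) yields $\frac{\partial^l}{\partial t^l}g_{n,0}[M']\le \frac{\partial^l}{\partial t^l}g_{n,0}[M]$, which is the claimed nonincreasing behavior.

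\textbf{Case $(i)$, $\sup(N_l)=\bar{M}_0$.} I argue by contradiction: suppose there exists $M_2\in N_l$ with $M_2>\bar{M}_0$. Since $g_{n,0}[\bar{M}]$ satisfies $(N_g)$ and is therefore strictly negative a.e., case $(a)$ of the proof of Theorem \ref{t-igual-signo-gm1-gm2} applied for $k=l=0$ to \eqref{e-green-M0-M1-derivative-2} shows that $g_{n,0}[M]$ is strictly negative on $(a,b)\times(a,b)$ and strictly decreasing in $M\in[\bar{M},\bar{M}_0)$. Because $N_0$ is open at its right endpoint, $\bar{M}_0$ must be an eigenvalue of the homogeneous problem (otherwise Theorem \ref{t-exis-green} together with \cite[Theorem 1.2.10]{C1} would furnish a Green's function $g_{n,0}[\bar{M}_0]$, forcing $\bar{M}_0\in N_0$). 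Monotonicity plus non-existence of a finite Green's function at $\bar{M}_0$ then forces $g_{n,0}[M](t,s)\to -\infty$ as $M\to\bar{M}_0^-$ on a set $\mathcal{A}\subset I\times I$ of positive Lebesgue measure. Integrating \eqref{e-green-M0-M1-derivative-2} in $s\in[a,b]$ with parameters $M_0\in N_0$ close to $\bar{M}_0^-$ (hence $M_0>M_1$) and $M_1:=M_2$, and rearranging, the double integral of the nonnegative product $\frac{\partial^l}{\partial t^l}g_{n,0}[M_2]\,g_{n,0}[M_0]$ over $\mathcal{A}$ diverges to $+\infty$, multiplied by $(M_2-M_0)>0$; consequently $\int_a^b\frac{\partial^l}{\partial t^l}g_{n,0}[M_0](t,s)\,ds\to +\infty$, contradicting $\frac{\partial^l}{\partial t^l}g_{n,0}[M_0]\le 0$ established in the first step.

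\textbf{Case $(ii)$.} The scheme is identical after swapping $N_l$ for $P_l$. For $M_1\in P_l$ and $M_0\in N_0$ with $M_0>M_1$, the coefficient $M_0-M_1>0$ in \eqref{e-green-M0-M1-derivative-2} multiplies the product of the nonnegative $\frac{\partial^l}{\partial t^l}g_{n,0}[M_1]$ and the nonpositive $g_{n,0}[M_0]$, giving $0\le \frac{\partial^l}{\partial t^l}g_{n,0}[M_1]\le \frac{\partial^l}{\partial t^l}g_{n,0}[M_0]$, hence $M_0\in P_l$ and the nondecreasing monotonicity on $[\max\{\inf(N_0),\inf(P_l)\},\bar{M}_0)$. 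The blow-up contradiction for $\sup(P_l)=\bar{M}_0$ is word-for-word analogous. The main technical delicacy in both cases will be to verify that the divergent set $\mathcal{A}$ really produces a divergent double integral when paired with the fixed factor $\frac{\partial^l}{\partial t^l}g_{n,0}[M_2]$ (respectively $\frac{\partial^l}{\partial t^l}g_{n,0}[M_1]$); this is where the strictness embedded in $(N_g)$ via the function $\phi$ must be carefully used, just as in the positive counterpart.
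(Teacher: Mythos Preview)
Your approach is correct and is exactly what the paper intends: the paper gives no explicit proof of Theorem~\ref{t-dependencia-negativa}, saying only ``In an analogous way, one can prove the following result,'' and your transcription of the proof of Theorem~\ref{t-dependencia-positiva} with the signs and directions reversed is precisely that analogous argument. One small slip: in your contradiction step you write ``$M_0\in N_0$ close to $\bar{M}_0^-$ (hence $M_0>M_1$)'' while simultaneously setting $M_1:=M_2>\bar{M}_0$; the parenthetical should read $M_0<M_1$, but your subsequent computation with the factor $(M_2-M_0)>0$ is correct, so the argument is unaffected. Also, to make the divergence of the double integral rigorous you should, as in the positive case, first record the strict analogue of inequality~\eqref{des-gM-M<M0bar}, namely $\frac{\partial^l}{\partial t^l}g_{n,0}[M_2](t,s)<\frac{\partial^l}{\partial t^l}g_{n,0}[M](t,s)\le 0$ on $(a,b)\times(a,b)$ for $M\in N_0$, obtained via case~$(a)$ of Theorem~\ref{t-igual-signo-gm1-gm2}; you acknowledge this as a delicacy but it is what actually forces the integrand to be strictly positive on $\mathcal{A}$.
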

 
\section{Examples}

\begin{example}
	\label{example-Mixto0}
	It is very well known \cite{CCS1} that the second order mixed problem 
	\[u''(t)+M\, u(t)= \sigma(t), \; t \in I, \quad u(0)=u'(1)=0,\]
	has a unique solution for every $M \neq (\frac{\pi}{2}+k\, \pi)^2$, $k=0,1, \ldots$. 
	
	Moreover, the related Green's $g_{M,0}$ function is strictly negative on $(0,1] \times (0,1]$ if and only if $M < \frac{\pi^2}{4}$ and changes its sign on $I \times I$ for every $M > \frac{\pi^2}{4}$, $M \neq (\frac{\pi}{2}+k\, \pi)^2$, $k=0,1, \ldots$. Moreover it is immediate to verify that $g_0$ satisfies condition $(N_g)$.
	
	As a direct consequence of Theorem \ref{t-dependencia-negativa}, $(i)$, for the particular case of $k=l=0$, we have that for any $(t,s) \in (0,1] \times (0,1]$ function $g_{M,0}(t,s)$ is strictly decreasing with respect to $M \in \left(-\infty,\frac{\pi^2}{4}\right)$.
	
	Now, from Definition \ref{d-Green-function}, we have that 
	\begin{equation}
		\label{e-derivada-mixto0}
	\frac{\partial^2}{\partial t^2} g_{M,0}\left(t,s\right)= -M\, g_{M,0}\left(t,s\right), \qquad t \in I\backslash\{s\}.
	\end{equation}
	Thus, if $M \in \left(0,\frac{\pi^2}{4}\right)$, we have that $\frac{\partial}{\partial t} g_{M,0}\left(\cdot,s\right)$ is strictly increasing on $I\backslash\{s\}$. So, from the fact that $\frac{\partial}{\partial t} g_{M,0}\left(1,s\right)=0$, together the jump condition \eqref{condition-jump}, we conclude that 
	\[
	\frac{\partial}{\partial t} g_{M,0}\left(t,s\right)<0 \qquad \mbox{for all $(t,s) \in  [0,1) \times [0,1)$ and $M \in \left(0,\frac{\pi^2}{4}\right)$.}
	\]
	Using again \ref{t-dependencia-negativa}, $(i)$, in this case for $k=0$ and $l=1$, we deduce that $\frac{\partial}{\partial t} g_{M,0}\left(t,s\right)$ is strictly decreasing with respect to $M \in \left(0,\frac{\pi^2}{4}\right)$.
	
	It is immediate to verify that if $M=0$ then $\frac{\partial}{\partial t} g_{0,0}\left(t,s\right)=-1$ if $t\in [0,s)$ and $\frac{\partial}{\partial t} g_{0,0}\left(t,s\right)=0$ if $t\in (s,1]$. Moreover, from \eqref{e-derivada-mixto0}, we deduce that $\frac{\partial}{\partial t} g_{M,0}$ is strictly decreasing on $I\backslash\{s\}$ for all $M<0$. In consequence, the sign of $g_{M,0}$ coupled to the boundary condition at $(1,s)$, allows us to ensure that $\frac{\partial}{\partial t} g_{M,0}$ changes its sign on $I \times I$ for all $M<0$
	\end{example}
	
\begin{example}
	\label{example-Mixto1}	
Consider now that the second order mixed problem 
	\[u''(t)+M\, u'(t)= \sigma(t), \; t \in I, \quad u(0)=u'(1)=0,\]
	it is very easy to verify that it has a unique solution for every $M \in \R$,  and the corresponding Green's function is given by the expression
$$ g_{M,1}(t,s)=	\begin{cases}
		\frac{1-e^{M s}}{M} & 0\leq s\leq t\leq 1 \\
		\frac{e^{M s} \left(e^{-M t}-1\right)}{M} & 0<t<s\leq 1
	\end{cases}, \mbox{if $M \neq 0$, and } g_{0,1}(t,s) =	\begin{cases}
		-s & 0\leq s\leq t\leq 1 \\
		-t & 0<t<s\leq 1.
	\end{cases}
$$
It is obvious that $g_{M,1}$ is negative on $(0,1] \times (0,1]$ for all $M \in \R$. Moreover, since 
$$ \frac{\partial }{\partial t}g_{M,1}(t,s)=\begin{cases}  	0 & 0\leq s\leq t\leq 1 \\
	-e^{M (s-t)} & 0<t<s\leq 1
\end{cases}.
$$

In this case, the expression of the Green's function is very easy to manage, and so we can deduce the monotony behavior with respect to the parameter $M$.

Anyway, by using Theorem \ref{t-igual-signo-gm1-gm2}, for $k=1$ and $l=0$, we deduce immediately that function $g_{M,1}(t,s)$ is strictly decreasing with respect to $M \in \R$ for all $(t,s) \in (0,1] \times (0,1]$.

The same result, applied to the values $k=l=1$, says us that $\frac{\partial}{\partial t} g_{M,1}=0$ for all $t \in (s,1]$ and it is strictly decreasing with respect to $M \in \R$ for all $t \in (0,s)$.

Notice that in this example, Theorem \ref{t-dependencia-negativa} is not applicable, because we are considering the dependence with respect the parameter coefficient of $u'$ instead of the one of $u$. 
\end{example}


\begin{thebibliography}{20}
	\bibitem{C1} A. Cabada, \textit{Green's Functions in theory of Ordinary Differential Equations}, in: Springer Briefs in Mathematics, 2014.
	\bibitem{CCS1} A. Cabada, J.A. Cid, L. López-Somoza, \textit{Green's functions and spectral theory for the Hill's equation}, Appl. Math. Comput. \textbf{286} (2016) 88-105.
	\bibitem{l} R. Hakl, P.J. Torres, \textit{Maximum and antimaximum principles for a second order differential operator with variable coefficients of indefinite sign}, Appl. Math. Comput. \textbf{217} (2011) 7599-7611.
	\bibitem{pp} P.J. Torres,\textit{Existence of one-signed periodic solutions of some second-order differential equations via a Krasnoselskii fixed pint theorem}, J. Differential Equations \textbf{190} (2003) 643-662.
	\bibitem{MZ} M. Zhang, \textit{Optimal conditions for maximum and anti-maximum principles of the periodic solution problem}, Bound. Value Probl. \textbf{(2010)} 410986, hhtp://dx.doi.org/10.1155/2010/410986, 26pp.
	\bibitem{habet} De Coster, C., Habets, P.: \textit{The  lower and upper solutions method for boundary value problems}. Handbook of Differential Equations, pp.69-160, Elsevier/North-Holand, Amsterdam (2004). 
	\bibitem{kar} Kythee, P: \textit{Green's function and linear differential equations}. Theory, applications, and computation. Chapman, Hall/CRC Applied Mathematics and Nonlinear Sience Series. CRC Press, Boca Raton (2001). 
	
	
	
\end{thebibliography}
\end{document}